\documentclass[11pt,leqno]{amsart}
\topmargin= .5cm
\textheight= 22.5cm
\textwidth= 32cc
\baselineskip=16pt
\usepackage{indentfirst, amssymb,amsmath,amsthm}
\usepackage{csquotes}
\theoremstyle{definition}
\evensidemargin= .9cm
\oddsidemargin= .9cm
\newtheorem*{theoA}{Theorem A}
\newtheorem*{theoB}{Theorem B}
\newtheorem*{theoC}{Theorem C}
\newtheorem*{theoD}{Theorem D}
\newtheorem*{theoE}{Theorem E}
\newtheorem*{theoF}{Theorem F}
\newtheorem*{theoG}{Theorem G}
\newtheorem*{theoH}{Theorem H}

\newtheorem{theo}{Theorem}[section]
\newtheorem{lem}{Lemma}[section]

\newtheorem{defi}{Definition}[section]
\newtheorem{rem}{Remark}[section]
\newtheorem{question}{Question}[section]
\newcommand{\ol}{\overline}
\newcommand{\be}{\begin{equation}}
\newcommand{\ee}{\end{equation}}
\newcommand{\beas}{\begin{eqnarray*}}
\newcommand{\eeas}{\end{eqnarray*}}
\newcommand{\bea}{\begin{eqnarray}}
\newcommand{\eea}{\end{eqnarray}}
\newcommand{\lra}{\longrightarrow}
\numberwithin{equation}{section}
\theoremstyle{definitions}
\begin{document}
\title{}[Appeared in Tamkang Journal of Mathematics(TKJM)]
\vspace{3 cm}
\title[Uniqueness of a meromorphic function with its differential polynomial]{Uniqueness of power of a meromorphic function with its  differential polynomial}
\date{}
\author[B. Chakraborty ]{Bikash Chakraborty}
\date{}
\address{Department of Mathematics, Ramakrishna Mission Vivekananda Centenary College, Rahara, India-700 118}
\email{bikashchakraborty.math@yahoo.com, bikash@rkmvccrahara.org}
\maketitle
\let\thefootnote\relax
\footnotetext{2010 Mathematics Subject Classification: 30D35.}
\footnotetext{Key words and phrases: Meromorphic function, differential polynomial, small function, Br\"{u}ck Conjecture.}
\begin{abstract}
In this paper, taking the question of Zhang and L\"{u} (\cite{11}) into the background, we present one theorem which will improve and extend some recent results related to the Br\"{u}ck Conjecture.
\end{abstract}
\section{Introduction}
Throughout this paper, we use the standard notations of the Nevanlinna theory of meromorphic functions as explained in (\cite{4}).\par
Let $f$  and $g$ be a two non-constant meromorphic functions defined in the open complex plane $\mathbb{C}$. If for some $a\in\mathbb{C}\cup\{\infty\}$, $f$ and $g$ have the same set of $a$-points with the same multiplicities, then we say that $f$ and $g$ share the value $a$ counting multiplicities (in short, CM) and if we do not consider the multiplicities, then $f$ and $g$ are said to share the value $a$ ignoring multiplicities (in short, IM). If $a=\infty$, then the zeros of $f-a$ means the poles of $f$.\par
A meromorphic function $a=a(z)(\not\equiv 0,~\infty)$ is called a small function with respect to $f$ provided that $T(r,a)=S(r,f)$ as $r\lra \infty, r\not\in E$, where $E$ is a set of positive real numbers with finite Lebesgue measure. If $a=a(z)$ is a small function, then we say that $f$ and $g$ share $a$ IM (resp. CM) according to $f-a$ and $g-a$ share $0$ IM (resp. CM).\par
The subject on sharing values between entire functions with its derivatives was first studied by Rubel and Yang (\cite{8}). In 1977, they proved the following result:
\begin{theoA}(\cite{8}) Let $f$ be a non-constant entire function. If $f$ and $f^{'}$ share two distinct finite numbers $a$, $b$ CM, then $f = f^{'}$.
\end{theoA}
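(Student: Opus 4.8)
The plan is to convert the two CM-sharing hypotheses into two exponential identities and then to exploit that the resulting exponential factors are small functions of $f$. First I would note that, $f$ being entire and $f$, $f'$ sharing $a$ CM, every zero of $f-a$ of multiplicity $m$ is a zero of $f'-a$ of the same multiplicity $m$, and $f'-a$ has no other zeros; hence the quotient $\frac{f'-a}{f-a}=\frac{(f-a)'}{f-a}$ is entire and zero-free, so it has the form $e^{\alpha}$ for some entire function $\alpha$. Sharing $b$ CM likewise gives $\frac{f'-b}{f-b}=e^{\beta}$ for some entire $\beta$. (The degenerate possibilities $f'\equiv a$ and $f'\equiv b$ would make $f$ a linear polynomial and are ruled out directly, since a non-constant linear polynomial cannot avoid a finite value; this is a short preliminary step.)

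The key observation comes next: since $e^{\alpha}$ is entire, $T(r,e^{\alpha})=m(r,e^{\alpha})=m\!\left(r,\frac{(f-a)'}{f-a}\right)=S(r,f)$ by the lemma on the logarithmic derivative, and similarly $T(r,e^{\beta})=S(r,f)$. Thus $e^{\alpha}$, $e^{\beta}$, and every polynomial combination of them such as $e^{\alpha}-e^{\beta}$ and $e^{\alpha}-1$, are small functions with respect to $f$.

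Now I would extract the decisive relation. From $f'-a=e^{\alpha}(f-a)$ one gets $f'-f=(e^{\alpha}-1)(f-a)$, and in the same way $f'-f=(e^{\beta}-1)(f-b)$; equating the two and rearranging yields
\[
(e^{\alpha}-e^{\beta})\,f = a(e^{\alpha}-1)-b(e^{\beta}-1).
\]
If $e^{\alpha}\not\equiv e^{\beta}$, the right-hand side and the coefficient $e^{\alpha}-e^{\beta}$ are small functions, so $f$ itself would be a small function of $f$, i.e.\ $T(r,f)=S(r,f)$, impossible for a non-constant $f$. Hence $e^{\alpha}\equiv e^{\beta}$, and the identity above collapses to $(e^{\alpha}-1)(b-a)=0$; since $a\neq b$ this forces $e^{\alpha}\equiv 1$, that is $f'-a\equiv f-a$, i.e.\ $f\equiv f'$.

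The only place I expect to need genuine care is the first step: checking that the CM hypothesis really makes $\frac{f'-a}{f-a}$ entire and nowhere zero (handling multiplicities at the $a$-points, and disposing of the linear-polynomial degeneracies). After that, the logarithmic-derivative estimate and the elementary algebra close the argument at once.
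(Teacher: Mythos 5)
The paper does not actually prove Theorem~A --- it is quoted from Rubel--Yang \cite{8} --- so your argument has to stand on its own, and it does not: there is a genuine gap at the step where you declare $e^{\alpha}=\frac{f'-a}{f-a}$ to be a small function of $f$. You justify this by writing $\frac{f'-a}{f-a}=\frac{(f-a)'}{f-a}$ and invoking the lemma on the logarithmic derivative, but $(f-a)'=f'$, so the logarithmic derivative of $f-a$ is $\frac{f'}{f-a}$, not $\frac{f'-a}{f-a}$; the two differ by $\frac{a}{f-a}$. The correct estimate is therefore only $m\left(r,\frac{f'-a}{f-a}\right)\leq m\left(r,\frac{f'}{f-a}\right)+m\left(r,\frac{1}{f-a}\right)+O(1)=m\left(r,\frac{1}{f-a}\right)+S(r,f)$, and $m\left(r,\frac{1}{f-a}\right)$ is not $S(r,f)$ in general --- it can be as large as $\delta(a,f)\,T(r,f)$. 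For entire $f$ the second fundamental theorem only bounds the \emph{sum} $m\left(r,\frac{1}{f-a}\right)+m\left(r,\frac{1}{f-b}\right)$ by $T(r,f)+S(r,f)$, so your (correct) identity $(e^{\alpha}-e^{\beta})f=a(e^{\alpha}-1)-b(e^{\beta}-1)$ only yields $T(r,f)\leq 2T(r,e^{\alpha})+2T(r,e^{\beta})+O(1)\leq 2T(r,f)+S(r,f)$, which is no contradiction.

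This is not a repairable slip of wording but the crux of the whole subject: if sharing one value CM automatically made $\frac{f'-a}{f-a}$ a small function of $f$, the Br\"{u}ck-type problems this paper studies would largely trivialize. What the logarithmic derivative lemma genuinely gives is that $\alpha'=\frac{f''}{f'-a}-\frac{f'}{f-a}$ is entire with $T(r,\alpha')=m(r,\alpha')=S(r,f)$ (similarly for $\beta'$), and the actual proofs of Theorem~A (Rubel--Yang, and Mues--Steinmetz for the IM version) work with such auxiliary expressions and with the combined quotient $\frac{(f'-a)(f'-b)}{(f-a)(f-b)}$, in which the two shared values cooperate; the argument is considerably longer than your closing algebra. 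Your preliminary step is fine: CM sharing does make $\frac{f'-a}{f-a}$ entire and zero-free, and the degenerate case $f'\equiv a$ is correctly disposed of.
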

In 1979, Mues and Steinmetz obtained the same result but in relax sharing environment as follows:
\begin{theoB}(\cite{7}) Let $f$ be a non-constant entire function. If $f$ and $f^{'}$ share two distinct values $a$, $b$ IM, then $f^{'}\equiv f$.
\end{theoB}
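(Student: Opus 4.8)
\medskip
\noindent The plan is to reduce the equality $f'\equiv f$ to showing that the entire function
\[
\psi:=\frac{f'-f}{(f-a)(f-b)}
\]
vanishes identically; since $f$ is non-constant, $(f-a)(f-b)\not\equiv0$, so $\psi\equiv0$ is \emph{equivalent} to $f'\equiv f$, and a short computation gives $(a-b)\psi=\dfrac{f'-a}{f-a}-\dfrac{f'-b}{f-b}$, the cross terms in $(f'-a)(f-b)-(f'-b)(f-a)$ collapsing to $(a-b)(f'-f)$. I would first pin down the local picture. As $a\neq b$, at most one of $a,b$ is $0$; assume $0\notin\{a,b\}$ (the case in which one value is $0$ is handled by a parallel argument, which I omit here). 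If $z_0$ is an $a$-point of $f$, the sharing hypothesis forces $f'(z_0)=a$; were $f-a$ to vanish at $z_0$ to order $\ge2$ we would get $f'(z_0)=0\neq a$. Hence every $a$-point of $f$ is simple, and likewise every $b$-point, and the zero sets of $f'-a$ and $f-a$ (resp.\ of $f'-b$ and $f-b$) coincide. It follows that
\[
u:=\frac{f'-a}{f-a},\qquad v:=\frac{f'-b}{f-b}
\]
are \emph{entire} (the denominators' zeros are cancelled), that $u=\psi(f-b)+1$ and $v=\psi(f-a)+1$, that $u=1$ at every $b$-point and $v=1$ at every $a$-point of $f$, and that $f'=\psi(f-a)(f-b)+f$.

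The second ingredient is a Nevanlinna bookkeeping step. Applying the Second Main Theorem to the entire function $f$ for the values $a,b,\infty$, and using $\overline N(r,f)=0$ together with the simplicity of the $a$- and $b$-points, I get
\[
T(r,f)\le N\!\left(r,\tfrac{1}{f-a}\right)+N\!\left(r,\tfrac{1}{f-b}\right)+S(r,f).
\]
On the other hand the entire function $h:=f-f'$ (which we may assume $\not\equiv0$, else we are done) vanishes at every $a$-point and every $b$-point of $f$ (there $f=f'=a$, resp.\ $=b$), and these points are distinct; hence $N(r,\tfrac1{f-a})+N(r,\tfrac1{f-b})\le N(r,\tfrac1h)\le T(r,h)+O(1)=m(r,h)+O(1)\le T(r,f)+S(r,f)$, the last step by the lemma on logarithmic derivatives applied to $h=f(1-f'/f)$. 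Therefore $N(r,\tfrac1{f-a})+N(r,\tfrac1{f-b})=T(r,f)+S(r,f)$. This ``full‑multiplicity'' identity, combined with the First Main Theorem, controls the growth of $u,v,\psi$; in particular $T(r,u)=m(r,u)\le m(r,\tfrac{f'}{f-a})+m(r,\tfrac{a}{f-a})+O(1)=m(r,\tfrac1{f-a})+S(r,f)$, and similarly for $v$, so that $T(r,u)+T(r,v)=T(r,f)+S(r,f)$.

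Finally I would argue by contradiction, assuming $\psi\not\equiv0$, equivalently $u\not\equiv v$. Then $f=\dfrac{(b-a)+au-bv}{u-v}$, so the Nevanlinna growth of $f$ is controlled by that of the entire functions $u,v$. Feeding this back, together with the growth bounds of the previous step, into a further application of the Second Main Theorem to $f'$ — now retaining the ramification contributed by the multiple zeros of $f'-a$ and $f'-b$ (equivalently, by the zeros of $u$ and $v$, which lie among the $a$- and $b$-points of $f$) and exploiting that $u=1$ at the $b$-points and $v=1$ at the $a$-points of $f$ — should force $u$ and $v$ to be constants. But with $u,v\in\mathbb{C}$ we have $f'=uf+a(1-u)=vf+b(1-v)$; $u\neq v$ would make $f$ constant, so $u=v$, whence $(a-b)(1-u)=0$, i.e.\ $u=1$ and $f'\equiv f$ — contradicting $\psi\not\equiv0$. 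Hence $\psi\equiv0$, that is, $f'\equiv f$. I expect the main obstacle to be precisely this last step: extracting the contradiction from $\psi\not\equiv0$ requires a careful accounting of the \emph{multiplicities} of the zeros of $f'-a$ and $f'-b$ (and of the zeros of $\psi$), not merely of their locations, and this multiplicity analysis is the technical heart of the argument. A viable fallback, should the estimates only yield that $\psi$ is a nonzero constant $c$, is to integrate the Riccati equation $f'=c(f-a)(f-b)+f$ explicitly and to verify that none of its non-constant solutions shares both $a$ and $b$ IM with its derivative; either way $\psi\equiv0$, i.e.\ $f'\equiv f$.
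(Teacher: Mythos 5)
First, a point of reference: the paper contains no proof of Theorem~B --- it is quoted from Mues and Steinmetz \cite{7} purely as background for the Br\"{u}ck conjecture --- so your attempt has to be judged on its own terms rather than against an argument in the text.

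Your preparatory work is largely sound where it applies: the identity $(a-b)\psi=\frac{f'-a}{f-a}-\frac{f'-b}{f-b}$, the simplicity of the $a$- and $b$-points when $0\notin\{a,b\}$, the entirety of $u$ and $v$, the Second Main Theorem bound $T(r,f)\le N(r,\tfrac1{f-a})+N(r,\tfrac1{f-b})+S(r,f)$, and the reverse estimate via $h=f-f'$ yielding $N(r,\tfrac1{f-a})+N(r,\tfrac1{f-b})=T(r,f)+S(r,f)$ are all correct (though for $T(r,u)+T(r,v)$ you only establish the inequality $\le T(r,f)+S(r,f)$, not the asserted equality). But there are two genuine gaps. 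The decisive one is the last step: you say a further application of the Second Main Theorem ``should force $u$ and $v$ to be constants'' and yourself concede that this multiplicity analysis ``is the technical heart of the argument.'' That is exactly right, and it is precisely the part you have not done: everything before it is routine bookkeeping, while the theorem lives or dies on eliminating the case $\psi\not\equiv0$. Neither your main route nor your fallback (integrating the Riccati equation $f'=c(f-a)(f-b)+f$ and checking that no non-constant solution shares $a$ and $b$ with its derivative) is carried out. The second gap is the case $0\in\{a,b\}$, which you dismiss as ``a parallel argument.'' It is not parallel: if $a=0$, a zero of $f$ of multiplicity $m\ge2$ gives a zero of $f'$ of multiplicity $m-1\ge1$, which is fully consistent with IM sharing, so the $a$-points need not be simple and $u=f'/f$ acquires poles --- the entirety of $u$, on which your whole setup rests, fails there. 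As it stands the proposal is a reasonable sketch of the classical strategy, not a proof.
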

Subsequently, similar considerations have been made with respect to higher derivatives and more general differential expressions as well. Above theorems motivate researchers to study the relation between an entire function and its derivative counterpart for one CM shared value. In this direction, in 1996, the following famous conjecture was proposed by Br\"{u}ck (\cite{3}):\\

{\it {\bf Conjecture:} Let $f$ be a non-constant entire function such that the hyper order $\rho _{2}(f)$ of $f$ is not a positive integer or infinite, where
 $$\rho _{2}(f)=\limsup\limits_{r\lra\infty}\frac{ \log \log T(r,f)}{\log r}.$$ If $f$ and $f^{'}$ share a finite value $a$ CM, then $\frac{f^{'}-a}{f-a}=c$, where $c$ is a non-zero constant.}\\ \par
In recent years, many results have been published concerning the above conjecture, (see, \cite{bc1, bc2, bc3, th, th1, 1, 2a, 2, 5a, 8bc, ly}). Next we recall the following definitions:
\begin{defi}(\cite{5}) Let $p$ be a positive integer and $a\in\mathbb{C}\cup\{\infty\}$.
\begin{enumerate}
\item[(i)] $N(r,a;f\mid \geq p)$ (resp. $\ol N(r,a;f\mid \geq p)$) denotes the counting function (resp. reduced counting function) of those $a$-points of $f$ whose multiplicities are not less than $p$.
\item[(ii)] $N(r,a;f\mid \leq p)$ (resp. $\ol N(r,a;f\mid \leq p)$) denotes the counting function (resp. reduced counting function) of those $a$-points of $f$ whose multiplicities are not greater than $p$.
\end{enumerate}
\end{defi}
\begin{defi} (\cite{10}) For $a\in\mathbb{C}\cup\{\infty\}$ and a positive integer $p$, we define
$$N_{p}(r,a;f)=\ol N(r,a;f)+\ol N(r,a;f\mid \geq 2)+\ldots+\ol N(r,a;f\mid \geq p).$$
\end{defi}
\begin{defi} (\cite{10}) For $a \in \mathbb{C}\cup\{\infty\}$ and a positive integer p, we put
$$\delta_{p}(a,f)=1-\limsup\limits_{r \to \infty} \frac{{N}_{p}(r,a;f)}{T(r,f)}.$$
Thus $$0\leq \delta(a,f) \leq \delta_{p}(a,f) \leq \delta_{p-1}(a,f) \leq...\leq \delta_{2}(a,f) \leq \delta_{1}(a,f)=\Theta(a,f)\leq1.$$
\end{defi}
\begin{defi} (\cite{bc3}) For two positive integers $n$, $p$ we define
$$\mu_{p}= \min\{n,p\}~~\text{and}~~\mu_{p}^{*}= p+1-\mu_{p}.$$
Then clearly $$ N_{p}(r,0;f^{n}) \leq \mu_{p}N_{\mu_{p}^{*}}(r,0;f).$$
\end{defi}
\begin{defi} (\cite{1}) Let $z_{0}$ be a zero of $f-a$ of multiplicity $p$ and a zero of $g-a$ of multiplicity $q$.
 \begin{enumerate}
 \item [i)] We denote by $\ol N_{L}(r,a;f)$, the counting function of those $a$-points of $f$ and $g$ where $p>q\geq 1$,
 \item [ii)]  by $N^{1)}_{E}(r,a;f)$, we denote the counting function of those $a$-points of $f$ and $g$ where $p=q=1$ and
 \item [iii)]  by $\ol N^{(2}_{E}(r,a;f)$, we denote the counting function of those $a$-points of $f$ and $g$ where $p=q\geq 2$, each point in these counting functions is counted only once.
     \end{enumerate}
Similarly, we can define $\ol N_{L}(r,a;g),\; N^{1)}_{E}(r,a;g),\; \ol N^{(2}_{E}(r,a;g).$
\end{defi}
\begin{defi} (\cite{4a}) Let $k$ be a non-negative integer or infinity and $a\in\mathbb{C}\cup\{\infty\}$. By $E_{k}(a;f)$, we mean the set of all $a$-points of $f$, where an $a$-point of multiplicity $m$ is counted $m$ times if $m\leq k$ and $k+1$ times if $m>k$.\par If $E_{k}(a;f)=E_{k}(a;g)$, then we say that $f$ and $g$ share the value $a$ with weight $k$.
\end{defi}
Thus we note that $f$ and $g$ share a value $a-$ IM (resp. CM) if and only if $f$ and $g$ share $(a,0)$ (resp. $(a,\infty)$).\par
With the notion of weighted sharing of values Lahiri-Sarkar (\cite{5}) improved the result of Zhang (\cite{9}). In (\cite{10}), Zhang further extended the result of Lahiri-Sarkar (\cite{5}) and replaced the concept of value sharing by small function sharing.\par
In 2008, Zhang and L\"{u}(\cite{11}) further considered the uniqueness of the $n-$th power of a meromorphic function sharing a small function with its $k-$ th derivative and proved  the following theorem:
\begin{theoC}(\cite{11}) Let $ k(\geq 1)$, $n(\geq 1)$ be integers and $f$ be a non-constant meromorphic function. Also, let $a(z) (\not\equiv 0,\infty )$ be a small function with respect to $f$. Suppose $f^{n}-a$ and $f^{(k)}-a$ share $(0,l)$. If $l=\infty$ and \par
\be \label {e1.1}(3+k)\Theta(\infty,f)+2\Theta(0,f)+\delta_{2+k}(0,f) > 6+k-n, \ee\\
or, $l=0$ and \par
\be \label {e1.2}(6+2k)\Theta(\infty,f)+4\Theta(0,f)+2\delta_{2+k}(0,f) > 12+2k-n, \ee\\
then $f^{n}$ $\equiv$ $f^{(k)}$ .
\end{theoC}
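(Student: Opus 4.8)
The plan is to argue by contradiction: assume $f^{n}\not\equiv f^{(k)}$ and derive, in the end, an inequality violating (\ref{e1.1}) when $l=\infty$ and (\ref{e1.2}) when $l=0$. Put $F=f^{n}/a$ and $G=f^{(k)}/a$, so that $F-1$ and $G-1$ share $(0,l)$; since $a$ is a small function, the lemma on the logarithmic derivative lets one move freely between $F,G$ and $f^{n},f^{(k)}$ up to an $S(r,f)$ error (recall $S(r,f^{(k)})=S(r,f)$). The quantitative inputs are the standard Milloux-type inequalities $T(r,f^{(k)})\le T(r,f)+k\,\ol N(r,\infty;f)+S(r,f)$ and $N_{2}(r,0;f^{(k)})\le T(r,f^{(k)})-T(r,f)+N_{k+2}(r,0;f)+S(r,f)$, which together yield $N_{2}(r,0;f^{(k)})\le k\,\ol N(r,\infty;f)+N_{k+2}(r,0;f)+S(r,f)$, together with Definition 1.4 (with $p=2$), which gives $N_{2}(r,0;f^{n})\le 2\,\ol N(r,0;f)$ for every $n\ge 1$. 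The engine of the argument is the auxiliary function
$$\mathcal{H}=\left(\frac{F''}{F'}-\frac{2F'}{F-1}\right)-\left(\frac{G''}{G'}-\frac{2G'}{G-1}\right),$$
which satisfies $m(r,\mathcal{H})=S(r,f)$, is holomorphic --- indeed vanishes --- at every simple common $1$-point of $F$ and $G$, and has at most simple poles elsewhere.

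I would first dispose of the case $\mathcal{H}\equiv 0$. Two integrations give $\frac{1}{F-1}=\frac{A}{G-1}+B$ with constants $A\,(\ne 0)$ and $B$. If $B=0$ and $A=1$ this reads $F\equiv G$, i.e. $f^{n}\equiv f^{(k)}$, the desired conclusion. In each of the remaining configurations ($B=0,\,A\ne 1$; $B\ne 0,\,A=B$; $B\ne 0,\,A\ne B$) the identity exhibits $F$ as a non-degenerate fractional linear transform of $G$, so that one of the values $0,\infty$ of $G$ becomes an exceptional value of $F$; applying the second main theorem to $F$ at $0,1$ and that exceptional value, and converting $\ol N(r,0;G)$ and $\ol N(r,\infty;G)$ back into Nevanlinna functions of $f$ by the inputs above, one obtains a bound on $n\,T(r,f)$ strong enough to contradict whichever of (\ref{e1.1}), (\ref{e1.2}) is in force. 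Hence in this case $f^{n}\equiv f^{(k)}$.

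For $\mathcal{H}\not\equiv 0$ I would estimate $N(r,\infty;\mathcal{H})$ by the reduced counting functions of the multiple poles of $f$, the multiple zeros of $F$ and of $G$, the auxiliary terms $\ol N_{0}(r,0;F')$ and $\ol N_{0}(r,0;G')$ (zeros of $F'$, $G'$ lying off the level sets $\{0,1,\infty\}$ of $F$, $G$), and --- only when $l<\infty$ --- the mismatch terms $\ol N_{L}(r,1;F)$, $\ol N_{L}(r,1;G)$, $\ol N^{(2}_{E}(r,1;F)$. Because $\mathcal{H}$ vanishes at every simple common $1$-point, $N^{1)}_{E}(r,1;F)\le N(r,0;\mathcal{H})\le N(r,\infty;\mathcal{H})+S(r,f)$. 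Feeding this into the second main theorem for $F$ and for $G$, splitting $\ol N(r,1;F)$ into $N^{1)}_{E}$, $\ol N^{(2}_{E}$ and $\ol N_{L}$ according to the weight $l$, letting the $\ol N_{0}$-terms be absorbed by those subtracted in the second main theorem, and finally replacing $N_{2}(r,0;F)$, $N_{2}(r,0;G)$ and the pole terms by their estimates in terms of $f$, the whole collapses to an inequality of the shape
$$n\,T(r,f)\le (3+k)\,\ol N(r,\infty;f)+2\,\ol N(r,0;f)+N_{k+2}(r,0;f)+S(r,f)$$
when $l=\infty$, and to the inequality obtained by doubling its right-hand side when $l=0$ (the extra factor coming from the $\ol N_{L}$-terms absent under CM sharing). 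Dividing by $T(r,f)$, letting $r\to\infty$, and invoking the definitions of $\Theta(\infty,f)$, $\Theta(0,f)$ and $\delta_{k+2}(0,f)$, this contradicts (\ref{e1.1}), respectively (\ref{e1.2}), and completes the proof.

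The crux is precisely the accounting in the case $\mathcal{H}\not\equiv 0$: one must extract the sharpest admissible bound for $N(r,\infty;\mathcal{H})$ and then decide, term by term, which Nevanlinna quantity to discard into an $S(r,f)$, which to bound by a multiple of $T(r,f)$, and which to keep as a contribution to $\Theta(\infty,f)$, $\Theta(0,f)$ or $\delta_{k+2}(0,f)$, so that the final numerical inequality lands exactly on the thresholds $6+k-n$ and $12+2k-n$ and no worse. A subsidiary point demanding care is that every estimate touching $f^{(k)}$ --- its characteristic, its zeros, and the interplay of its poles with those of $f$ --- be kept uniform in $k$; this is exactly what the Milloux-type inequalities together with the counting-function conventions of Definitions 1.2, 1.4 and 1.5 provide.
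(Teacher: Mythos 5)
Your outline is correct: the division into $\mathcal{H}\equiv 0$ and $\mathcal{H}\not\equiv 0$, the estimate of $N(r,\infty;\mathcal{H})$ combined with the second main theorem, and the conversion via $N_{2}(r,0;f^{n})\le 2\ol N(r,0;f)$ and $N_{2}(r,0;f^{(k)})\le k\,\ol N(r,\infty;f)+N_{k+2}(r,0;f)+S(r,f)$ land exactly on the thresholds $6+k-n$ and $12+2k-n$. Note that the paper only cites Theorem C (Zhang--L\"{u}) without proving it, but your argument is essentially the same machinery the paper deploys for its own Theorem \ref{t1}, of which this is the special case $P[f]=f^{(k)}$ (the only cosmetic difference being that in the $H\equiv 0$ case the paper routes through Lemmas \ref{l14} and \ref{l10} rather than your direct case analysis of the constants $A,B$ in the M\"{o}bius relation).
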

In the same paper, Zhang and L\"{u} (\cite{11}) posed the following question:
\begin{question}
What will happen if  $f^{n}$ and $[f^{(k)}]^{s}$ share a small function?
\end{question}
In  2010, Chen and Zhang (\cite{2}) gave a answer to the above question, but unfortunately there were some gaps in the proof of the theorems in (\cite{2}). To rectify the gaps in (\cite{2}) as well as to answer the question of Zhang and L\"{u} (\cite{11}), in  2010, Banerjee and Majumder (\cite{1}) proved two theorems, one of which further improved {\it Theorem C} whereas the other answers the Question 1.1.
\begin{theoD} (\cite{1}) Let $ k(\geq 1)$, $n(\geq 1)$ be integers and $f$ be a non-constant meromorphic function. Also let $a(z) (\not\equiv 0,\infty )$ be a small function with respect to $f$. Suppose $f^{n}-a$ and $f^{(k)}-a$ share $(0,l)$. If $l\geq 2$ and \par
\be \label {e1.3} (3+k)\Theta(\infty,f)+2\Theta(0,f)+\delta_{2+k}(0,f) > 6+k-n, \ee\\
or, $l=1$ and \par
\be \label {e1.4} \left(\frac{7}{2}+k\right)\Theta(\infty,f)+\frac{5}{2}\Theta(0,f)+\delta_{2+k}(0,f) > 7+k-n, \ee\\
or, $l=0$ and \par
\be \label {e1.5}(6+2k)\Theta(\infty,f)+4\Theta(0,f)+\delta_{2+k}(0,f)+\delta_{1+k}(0,f) > 12+2k-n, \ee\\
then $f^{n}=f^{(k)}$ .
\end{theoD}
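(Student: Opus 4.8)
The plan is to normalise the problem and then carry out the standard dichotomy on an auxiliary differential function: in one branch the Nevanlinna inequalities are made to collide with the deficiency hypotheses, and in the other an integration forces $f^{n}\equiv f^{(k)}$. First I would put $F=f^{n}/a$ and $G=f^{(k)}/a$; since $a$ is small with respect to $f$, one has $T(r,F)=nT(r,f)+S(r,f)$ and, from $T(r,f^{(k)})\le T(r,f)+k\,\ol N(r,\infty;f)+S(r,f)$, also $T(r,G)\le(k+1)T(r,f)+S(r,f)$, so $S(r,F)=S(r,G)=S(r,f)=:S(r)$; and the assumption that $f^{n}-a$ and $f^{(k)}-a$ share $(0,l)$ is exactly the assumption that $F$ and $G$ share $(1,l)$. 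I would also record at the outset the estimates $\ol N(r,0;f^{(k)})\le N_{k+1}(r,0;f)+k\,\ol N(r,\infty;f)+S(r)$ and $N_{2}(r,0;f^{(k)})\le N_{k+2}(r,0;f)+k\,\ol N(r,\infty;f)+S(r)$ (obtained from the lemma on the logarithmic derivative applied to $f^{(k)}/f$, splitting the zeros of $f^{(k)}$ into those at zeros of $f$ and the rest), the inequality $N_{2}(r,0;f^{n})\le\mu_{2}N_{\mu_{2}^{*}}(r,0;f)\le 2\,\ol N(r,0;f)$ from Definition 1.4, and the fact that every pole of $f^{(k)}$ has order at least $k+1\ge 2$, so $N_{2}(r,\infty;G)=2\,\ol N(r,\infty;f)+S(r)$ and $\ol N(r,\infty;F\mid\ge 2)\le\ol N(r,\infty;f)+S(r)$.

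Now assume, towards a contradiction, that $f^{n}\not\equiv f^{(k)}$, i.e.\ $F\not\equiv G$, and set
\[H=\left(\frac{F''}{F'}-\frac{2F'}{F-1}\right)-\left(\frac{G''}{G'}-\frac{2G'}{G-1}\right).\]
\emph{Case 1: $H\not\equiv 0$.} Then the poles of $H$ are simple and are confined to the multiple zeros and the multiple poles of $F$ and of $G$, the common $1$-points at which $F$ and $G$ have different multiplicities, and the zeros of $F'$ and $G'$ which are not zeros of $F$, $F-1$, $G$ or $G-1$; inserting the resulting estimate for $N(r,\infty;H)$, together with $N^{1)}_{E}(r,1;F)\le N(r,\infty;H)+S(r)$, into the second main theorem for $F$ gives, for each value of $l$, an inequality of the shape
\[T(r,F)\le N_{2}(r,0;F)+N_{2}(r,0;G)+\ol N(r,\infty;F)+N_{2}(r,\infty;G)+\varepsilon_{l}(r)+S(r),\]
where $\varepsilon_{l}(r)=0$ for $l\ge 2$; for $l=1$ the correction absorbs $\ol N_{L}(r,1;F)+\ol N_{L}(r,1;G)$ and is at most $\tfrac12\bigl(\ol N(r,0;f)+\ol N(r,\infty;f)\bigr)+S(r)$; and for $l=0$ those terms must be estimated by a second use of the second main theorem, now for $G$, which brings in $\ol N(r,0;f^{(k)})\le N_{k+1}(r,0;f)+k\,\ol N(r,\infty;f)+S(r)$ and so produces the extra $\delta_{1+k}(0,f)$ and the larger coefficients of (\ref{e1.5}). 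Substituting the estimates of the first paragraph, writing $T(r,F)=nT(r,f)+S(r)$, dividing by $T(r,f)$ and letting $r\to\infty$ outside the exceptional set, one contradicts (\ref{e1.3}) when $l\ge 2$, (\ref{e1.4}) when $l=1$, and (\ref{e1.5}) when $l=0$. Hence $H\equiv 0$.

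\emph{Case 2: $H\equiv 0$.} Two integrations give $\dfrac{1}{F-1}=\dfrac{\alpha}{G-1}+\beta$ with $\alpha\ne 0$, i.e.\ $F=\dfrac{(\beta+1)G+(\alpha-\beta-1)}{\beta G+(\alpha-\beta)}$. If $\beta\ne 0$, then $F$ attains the value $(\beta+1)/\beta$ (which equals $0$ when $\beta=-1$) only at the poles of $G$, so $\ol N\bigl(r,(\beta+1)/\beta;F\bigr)\le\ol N(r,\infty;f)+S(r)$; feeding this, together with $\ol N(r,1;F)=\ol N(r,1;G)=\ol N(r,a;f^{(k)})$ and the derivative estimates, into the second main theorem for $F$ applied to $0,1,(\beta+1)/\beta,\infty$ (a three-element value set when $\beta=-1$) contradicts the hypothesis. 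If $\beta=0$ the relation collapses to $F-1=A(G-1)$ for a finite non-zero constant $A$; when $A=1$ this is exactly $f^{n}\equiv f^{(k)}$, as required, and when $A\ne 1$, comparing pole orders in $f^{n}=Af^{(k)}+a(1-A)$ forces every pole of $f$ to have the single order $k/(n-1)$ (so $f$ is entire if $n=1$), which fixes $\Theta(\infty,f)$ and, substituted into the second main theorem for $G$ at the four values $0,1,1-\tfrac1A,\infty$, again contradicts the hypothesis. Thus in every admissible situation $f^{n}\equiv f^{(k)}$, which is the assertion.

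The step I expect to be the main obstacle is \emph{Case 2}: controlling $\ol N(r,a;f^{(k)})$ precisely enough in the sub-case $\beta\ne 0$, and ruling out the affine sub-case $F-1=A(G-1)$ with $A\ne 1$ by exploiting the rigidity that $f^{n}=Af^{(k)}+a(1-A)$ imposes on the zeros and poles of $f$; within \emph{Case 1} the only delicate point is choosing the correction $\varepsilon_{0}(r)$ sharp enough for the bookkeeping to land exactly on (\ref{e1.5}). Once the two derivative lemmas are in place, the $H\not\equiv 0$ estimates for $l\ge 2$ and $l=1$ are routine.
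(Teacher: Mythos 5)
A preliminary remark: this paper does not actually prove Theorem D --- it is quoted from Banerjee--Majumder --- but the paper's Theorem \ref{t1} contains it (take $P[f]=f^{(k)}$, $n\geq 2$) and is proved by exactly the scheme you propose, so the comparison is still meaningful. Your Case 1 (the branch $H\not\equiv 0$) is the standard argument and matches the paper's Subcases 1.1.1, 1.1.2 and 1.2 essentially term for term; the only inaccuracy there is cosmetic: for $l=0$ the terms $\ol N_{L}(r,1;F)+\ol N_{L}(r,1;G)$ are absorbed via the weight-zero half of Lemma \ref{l12}, not by a second application of the second main theorem to $G$, but the coefficients you land on are the right ones.

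The genuine gap is in Case 2, in the sub-case $\beta=-1$ of your M\"{o}bius relation (equivalently $A=B$ in the normalization $\frac{1}{G-1}=\frac{A}{F-1}+B$; note that this sub-case contains $FG\equiv 1$, i.e.\ $f^{n}f^{(k)}\equiv a^{2}$, which your sketch never addresses). There the exceptional value $(\beta+1)/\beta$ collapses to $0$, and your plan --- second main theorem for $F$ at the degenerate value set $\{0,1,\infty\}$ with $\ol N(r,0;F)\leq \ol N(r,\infty;G)$ --- does not produce a contradiction. Indeed, in this sub-case every zero of $F$ would have to be a pole of $G$ and every pole of $F$ a point where $G=\alpha+1$, both impossible since $F$ and $G$ share their poles; hence $\ol N(r,0;f)=\ol N(r,\infty;f)=S(r,f)$, so $\Theta(0,f)=\Theta(\infty,f)=1$ and every deficiency equals $1$, which means hypotheses (\ref{e1.3})--(\ref{e1.5}) are \emph{automatically satisfied} and cannot be contradicted by any deficiency inequality. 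Concretely, your three-value estimate yields only $nT(r,f)\leq T(r,f)+(k+1)\ol N(r,\infty;f)+S(r,f)$, which is vacuous for $n=1$, a value Theorem D permits. The sub-case has to be closed from the functional equation itself: since $f$ has neither zeros nor poles one gets $\ol N(r,0;f^{(k)})\leq N_{k+1}(r,0;f)+k\ol N(r,\infty;f)+S(r,f)=S(r,f)$, and then the second main theorem applied to $G$ at $0$, $(B+1)/B$, $\infty$ (when $B\neq -1$), respectively a separate growth argument for $f^{n}f^{(k)}\equiv a^{2}$ (when $B=-1$), forces $T(r,G)=S(r,f)$ and hence the contradiction $T(r,f)=S(r,f)$. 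This is exactly the step the paper's route sidesteps by invoking Lemma \ref{l14} (three alternatives, the first excluded by a counting estimate) together with Lemma \ref{l10} (excluding $FG\equiv 1$). Your $\beta=0$, $A\neq 1$ sub-case is fine as stated --- the second main theorem for $G$ at $0$, $1-\frac{1}{A}$, $\infty$ already suffices, and the pole-order rigidity $p=k/(n-1)$ is not actually needed --- and your $\beta\neq 0,-1$ sub-case is correct.
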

\begin{theoE} (\cite{1}) Let $k(\geq 1)$, $n(\geq 1)$, $m(\geq 2)$ be integers and $f$ be a non-constant meromorphic function.
Also, let $a(z)(\not\equiv 0,\infty)$ be a small function with respect to $f$. Suppose $f^{n}-a$ and $[f^{(k)}]^{m}-a$ share $(0,l)$.
If $l=2$ and\par
\be\label{e1.3a}(3+2k)\;\Theta (\infty,f)+2\;\Theta (0,f)+2\delta_{1+k}(0,f)> 7+2k-n,\ee\\
or,  $l=1$ and
 \be\label{e1.4a}\left(\frac{7}{2}+2k\right)\;\Theta (\infty,f)+\frac{5}{2}\;\Theta (0,f)+2\delta_{1+k}(0,f)> 8+2k-n,\ee\\
or, $l=0$ and
\be\label{e1.5a}(6+3k)\;\Theta (\infty,f)+4\;\Theta (0,f)+3\delta_{1+k}(0,f)> 13+3k-n,\ee\\ then $f^{n}\equiv [f^{(k)}]^{m}$.
\end{theoE}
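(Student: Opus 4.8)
The plan is to reduce \emph{Theorem E} to a sharing-the-value-$1$ problem for two auxiliary functions and then to run the Br\"{u}ck-type argument organised around the function $H$ below. First I would normalise by putting
$$F:=\frac{f^{n}}{a},\qquad G:=\frac{[f^{(k)}]^{m}}{a}.$$
Since $a(\not\equiv 0,\infty)$ is small with respect to $f$ --- and, as one checks at the outset, with respect to $f^{(k)}$ as well, because $\Theta(\infty,f)$ carries a positive coefficient in every hypothesis, so $f$ is not of the exceptional polynomial type and $S(r,f^{(k)})=S(r,f)$ --- the hypothesis that $f^{n}-a$ and $[f^{(k)}]^{m}-a$ share $(0,l)$ means that $F$ and $G$ share $(1,l)$ modulo a term $S(r,f)$, and the conclusion $f^{n}\equiv[f^{(k)}]^{m}$ is equivalent to $F\equiv G$. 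Then I would record the bookkeeping that transports everything attached to $G$ or $f^{(k)}$ back to $f$: $T(r,F)=nT(r,f)+S(r,f)$, $T(r,G)=mT(r,f^{(k)})+S(r,f)$, $\ol N(r,\infty;G)\le\ol N(r,\infty;f)+S(r,f)$, the power identities $N_{2}(r,0;[f^{(k)}]^{m})=2\,\ol N(r,0;f^{(k)})$ and $N(r,\infty;[f^{(k)}]^{m})=m\,N(r,\infty;f^{(k)})$, valid since $m\ge 2$, and the standard derivative estimate $N_{p}(r,0;f^{(k)})\le N_{p+k}(r,0;f)+k\,\ol N(r,\infty;f)+S(r,f)$; in particular $\ol N(r,0;f^{(k)})\le N_{1+k}(r,0;f)+k\,\ol N(r,\infty;f)+S(r,f)$, which is the source of the $\delta_{1+k}(0,f)$ terms. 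These combine with $N_{2}(r,0;f)\le 2(1-\Theta(0,f))T(r,f)+S(r,f)$, $\ol N(r,\infty;f)\le(1-\Theta(\infty,f))T(r,f)+S(r,f)$ and $N_{1+k}(r,0;f)\le(1-\delta_{1+k}(0,f))T(r,f)+S(r,f)$.

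Next set
$$H:=\left(\frac{F''}{F'}-\frac{2F'}{F-1}\right)-\left(\frac{G''}{G'}-\frac{2G'}{G-1}\right)$$
and distinguish the cases $H\not\equiv 0$ and $H\equiv 0$. \textbf{Case 1: $H\not\equiv 0$.} Then $H$ has only simple poles, located at the multiple zeros of $F'$ and $G'$, the non-common poles of $F$ and $G$, and the $L$-type $1$-points of the pair $(F,G)$; the standard counting gives $\ol N(r,1;F)=\ol N(r,1;G)\le N(r,H)+S(r,f)$ together with an estimate of $N(r,H)$ in terms of $N_{2}(r,0;F)$, $N_{2}(r,0;G)$, $\ol N(r,\infty;F)$, $\ol N(r,\infty;G)$ and the $\ol N_{L}$, $\ol N^{(2}_{E}$ terms for the $1$-points, where the number of surviving $\ol N_{L}$ contributions is governed by the weight $l$. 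Feeding these into the second main theorem applied to $F$ and to $G$ with the three targets $0,1,\infty$, adding the two inequalities, and using the $(1,l)$-sharing to absorb all the $1$-point terms, I would obtain
$$T(r,F)+T(r,G)\le \big[\text{a fixed combination of } N_{2}(r,0;\cdot)\text{ and }\ol N(r,\infty;\cdot)\big]+S(r,f).$$
Rewriting the left side as $nT(r,f)+mT(r,f^{(k)})+S(r,f)$ and substituting the $f$-estimates from the previous paragraph turns this into a purely numerical inequality that contradicts \eqref{e1.3a} when $l=2$, \eqref{e1.4a} when $l=1$, and \eqref{e1.5a} when $l=0$ --- the differing weights are precisely what move the coefficients from $(3+2k),\,2,\,2$ to $\left(\tfrac{7}{2}+2k\right),\,\tfrac{5}{2},\,2$ to $(6+3k),\,4,\,3$ and the constant from $7+2k$ to $8+2k$ to $13+3k$. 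This forces $H\equiv 0$.

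\textbf{Case 2: $H\equiv 0$.} Integrating twice yields a M\"{o}bius relation
$$F=\frac{(b+1)G+(a_{0}-b-1)}{bG+(a_{0}-b)}$$
for constants $a_{0}(\neq 0)$ and $b$, and I would dispose of the three sub-cases $b\neq 0,-1$; $\ b=-1$; $\ b=0$. In the first two sub-cases $F$ (or $G$) omits a value, or satisfies a relation of type $FG\equiv cF+d$, so a further application of the second main theorem, now to $f^{(k)}$, combined with the $\Theta(\infty,f),\Theta(0,f),\delta_{1+k}(0,f)$ hypotheses, is contradictory. In the sub-case $b=0$ one gets $G=a_{0}F-(a_{0}-1)$; if $a_{0}=1$ this is exactly $F\equiv G$, while if $a_{0}\neq 1$ the zeros of $F$ set against the zeros and poles of $G$ --- once more through the second main theorem and the deficiency hypotheses --- are incompatible. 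Hence $F\equiv G$, i.e.\ $f^{n}\equiv[f^{(k)}]^{m}$.

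The main obstacle, and where essentially all the labour sits, is the Nevanlinna bookkeeping around the $m$-th power of the derivative: every term attached to $[f^{(k)}]^{m}$ or to $f^{(k)}$ must be pushed back to $f$ with the correct truncation level --- this is why the estimate $N_{p}(r,0;f^{(k)})\le N_{p+k}(r,0;f)+k\,\ol N(r,\infty;f)+S(r,f)$ and, crucially, the identity $N_{2}(r,0;[f^{(k)}]^{m})=2\,\ol N(r,0;f^{(k)})$ (which lowers the truncation from $2+k$ to $1+k$ at the cost of a factor $2$, resp.\ $3$) are indispensable --- and one must track, weight by weight, which $\ol N_{L}$ and $\ol N^{(2}_{E}$ contributions at the $1$-points can be absorbed and which cannot; that ledger is exactly what produces the three distinct sets of coefficients in \eqref{e1.3a}--\eqref{e1.5a}. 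A secondary technical point, needed throughout, is ensuring that $a$ remains a small function of every meromorphic function in play and that $f^{(k)}$ is non-constant, which again rests on the positivity of the $\Theta(\infty,f)$ contribution in the hypotheses.
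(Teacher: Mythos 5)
Theorem E is quoted from Banerjee--Majumder \cite{1} and is not proved in this paper, but your plan is the standard one and coincides with the template the paper itself uses for its own main theorem: pass to $F=f^{n}/a$, $G=[f^{(k)}]^{m}/a$, split on whether the auxiliary function $H$ vanishes identically, run the second main theorem together with the weighted-sharing ledger of $1$-points in Case 1, and reduce Case 2 to a M\"obius relation that is then forced to be the identity (the paper packages this last step as the trichotomy ``$N_{2}$-inequality, or $F\equiv G$, or $FG\equiv 1$'' plus a Clunie-type lemma ruling out $FG\equiv 1$, which is equivalent to your three sub-cases on the M\"obius coefficients). Your bookkeeping --- in particular $N_{2}(r,0;[f^{(k)}]^{m})=2\,\ol N(r,0;f^{(k)})$ for $m\ge 2$ and $\ol N(r,0;f^{(k)})\le N_{1+k}(r,0;f)+k\,\ol N(r,\infty;f)+S(r,f)$ --- reproduces exactly the coefficients and constants in \eqref{e1.3a}--\eqref{e1.5a}, so the outline is sound.
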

It can be easily proved that {\it Theorem D} is a better result than {\it Theorem E} for $m=1$ case. Also, it is observed that in {\it Theorem E}, the conditions (\ref{e1.3a})-(\ref{e1.5a}) are independent of $m$. \par
Very recently, in order to improve the results of Zhang (\cite{10}), Li and Huang (\cite{5a}) obtained the following theorem.
\begin{theoF}(\cite{5a}) Let $ k(\geq 1)$, $l(\geq 0)$ be integers and $f$ be a non-constant meromorphic function. Also, let $a(z) (\not\equiv 0,\infty )$ be a small function with respect to $f$. Suppose $f-a$ and $f^{(k)}-a$ share $(0,l)$. If $l\geq 2$ and \par
\be \label {e1.6} (3+k)\Theta(\infty,f)+\delta_{2}(0,f)+\delta_{2+k}(0,f) > k+4,\ee\\
or, $l=1$ and \par
\be \label {e1.7} \left(\frac{7}{2}+k\right)\Theta(\infty,f)+\frac{1}{2}\Theta(0,f)+\delta_{2}(0,f)+\delta_{2+k}(0,f) > k+5, \ee\\
or, $l=0$ and \par
\be \label {e1.8} (6+2k)\Theta(\infty,f)+2\Theta(0,f)+\delta_{2}(0,f)+\delta_{1+k}(0,f)+\delta_{2+k}(0,f) > 2k+10, \ee\\
then $f \equiv f^{(k)}$.
\end{theoF}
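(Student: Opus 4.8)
The plan is to run the by-now-standard weighted-sharing argument. First I would normalise by putting $F=f/a$ and $G=f^{(k)}/a$; since $a$ is small, the zeros and poles of $a$ contribute only to $S(r,f)$, so $F-1$ and $G-1$ share $(0,l)$, $\ol N(r,\infty;F)=\ol N(r,\infty;f)+S(r,f)$, $N_{p}(r,0;F)=N_{p}(r,0;f)+S(r,f)$ and $T(r,F)=T(r,f)+S(r,f)$. I will also use the two Milloux-type estimates $T(r,f^{(k)})\le T(r,f)+k\ol N(r,\infty;f)+S(r,f)$ and, for $p\ge 1$, $N_{p}(r,0;f^{(k)})\le N_{p+k}(r,0;f)+k\ol N(r,\infty;f)+S(r,f)$, which let one push every quantity attached to $G$ back to $f$. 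Everything then turns on the auxiliary function
\[
\mathcal H=\left(\frac{F''}{F'}-\frac{2F'}{F-1}\right)-\left(\frac{G''}{G'}-\frac{2G'}{G-1}\right),
\]
and the dichotomy $\mathcal H\equiv 0$ versus $\mathcal H\not\equiv 0$.

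Suppose first $\mathcal H\not\equiv 0$. Then $\mathcal H$ has only simple poles, every simple common $1$-point of $F$ and $G$ is a zero of $\mathcal H$, and since $m(r,\mathcal H)=S(r,f)$ one gets $N^{1)}_{E}(r,1;F)\le N(r,\infty;\mathcal H)+S(r,f)$, where $N(r,\infty;\mathcal H)$ is bounded by the reduced counting functions of the multiple poles and multiple zeros of $F$ and of $G$, by the $L$-type functions $\ol N_{L}(r,1;F)$ and $\ol N_{L}(r,1;G)$ coming from the weighted sharing, and by the extra zeros $\ol N_{0}(r,0;F')$, $\ol N_{0}(r,0;G')$ of the derivatives (which get absorbed by the $-N_{0}$ terms of the second main theorem). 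Feeding this into the second main theorem for $F$ and for $G$ with targets $0,1,\infty$, using the weight-$l$ hypothesis to redistribute the ramified $1$-points between the $\ol N_{L}$ and $N^{1)}_{E}$ terms, and then replacing $T(r,G),\ \ol N(r,0;G),\ N_{2}(r,0;G)$ by their $f$-analogues via the Milloux estimates, I expect to reach an inequality of the shape
\[
T(r,f)\le c_{1}\,\ol N(r,\infty;f)+c_{2}\,N_{2}(r,0;f)+c_{3}\,N_{1+k}(r,0;f)+c_{4}\,N_{2+k}(r,0;f)+S(r,f),
\]
with explicit $c_{i}=c_{i}(k,l)$. Dividing by $T(r,f)$ and letting $r\to\infty$ converts this into a linear inequality in $\Theta(\infty,f),\ \delta_{2}(0,f),\ \delta_{1+k}(0,f),\ \delta_{2+k}(0,f)$ that contradicts (\ref{e1.6}) when $l\ge 2$, (\ref{e1.7}) when $l=1$, and (\ref{e1.8}) when $l=0$. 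Hence $\mathcal H\equiv 0$.

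When $\mathcal H\equiv 0$, two integrations give a Möbius relation $\dfrac{1}{F-1}=\dfrac{A}{G-1}+B$ with $A\ (\neq 0),\,B\in\mathbb{C}$, so $F$ is a non-degenerate Möbius transform of $G$ and $T(r,f)=T(r,f^{(k)})+S(r,f)$. If $B=0$ the relation reads $f^{(k)}-a=A(f-a)$; for $A=1$ this is the conclusion, while for $A\neq 1$ each zero of $f$ satisfies $f^{(k)}=(1-A)a\neq 0$, so $f$ has no zero of multiplicity $>k$ and $\ol N(r,0;f^{(k)})=\ol N\!\left(r,\tfrac{A-1}{A}a;f\right)$; the second main theorem for $f$ with the small targets $0,\ \tfrac{A-1}{A}a,\ \infty$, together with $\ol N(r,0;f^{(k)})\le N_{1+k}(r,0;f)+k\ol N(r,\infty;f)+S(r,f)$, yields
\[
(k+1)\Theta(\infty,f)+\Theta(0,f)+\delta_{1+k}(0,f)\le k+2,
\]
which, using $\Theta(0,f)\ge\delta_{2}(0,f)$ and $\delta_{1+k}(0,f)\ge\delta_{2+k}(0,f)$, is incompatible with the relevant hypothesis among (\ref{e1.6})--(\ref{e1.8}). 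If $B\neq 0$, the poles of $F$ are precisely the $\bigl(1-\tfrac{A}{B}\bigr)$-points of $G$ and $F$ omits the value $\tfrac{B+1}{B}$; the sub-case $B=-1$ forces $f$ to have essentially no zeros and then, chasing the relation, essentially no poles, whereupon $\mathcal H\equiv 0$ degenerates to a quadratic relation in $f$ that would make $T(r,f)=S(r,f)$, absurd; the sub-case $B\neq 0,-1$ makes $\tfrac{B+1}{B}$ a Picard-type exceptional value distinct from $0$ and $1$, and the resulting second-main-theorem estimate again clashes with the hypothesis. In every surviving case $F\equiv G$, i.e.\ $f\equiv f^{(k)}$.

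The main obstacle is the combinatorics of the case $\mathcal H\not\equiv 0$: one must track exactly which multiple and ramified $1$-points of $F$ and $G$ are shared and, according to the weight $l$, how many of them land in the $\ol N_{L}$ rather than the $N^{1)}_{E}$ terms, because any slack there inflates the constants $c_{i}$ and destroys the sharp thresholds (\ref{e1.6})--(\ref{e1.8}); carrying the three weights $l\ge 2,\ l=1,\ l=0$ in parallel is what makes this step long. A lesser, essentially mechanical, point is checking that each sub-case under $\mathcal H\equiv 0$ is genuinely excluded by the stated inequality (or by a direct argument) and not merely by the stronger condition (\ref{e1.6}).
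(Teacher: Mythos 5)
Your plan is sound and is essentially the standard machinery; note, though, that the paper does not prove Theorem F at all -- it is quoted from Li and Huang \cite{5a} and is recovered here only as the special case $n=1$, $P[f]=f^{(k)}$ (so $\underline{d}(P)=\overline{d}(P)=1$, $\Gamma_P=k+1$, $\mu_2=1$, $\mu_2^*=2$) of Theorem \ref{t1}, whose hypotheses then reduce exactly to (\ref{e1.6})--(\ref{e1.8}). Your treatment of the case $H\not\equiv 0$ is the same as the paper's (second fundamental theorem for $F$ and $G$ combined with Lemmas \ref{l11}--\ref{l13}, then the Milloux-type estimate $N_p(r,0;f^{(k)})\le N_{p+k}(r,0;f)+k\ol N(r,\infty;f)+S(r,f)$, which is Lemma \ref{l9} specialised to $P[f]=f^{(k)}$). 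Where you genuinely diverge is the case $H\equiv 0$: you carry out the classical explicit discussion of the M\"{o}bius relation ($B=0$ with $A=1$ or $A\ne 1$, $B=-1$, $B\ne 0,-1$), each branch being killed by a second-main-theorem estimate or a degenerate algebraic relation; the paper instead packages this into the trichotomy of Lemma \ref{l14} (either the $N_2$-inequality holds, or $F\equiv G$, or $FG\equiv 1$), rules out the first alternative by the sharing hypothesis and the third by Lemma \ref{l10}. The two routes are equivalent in substance; yours is more self-contained but requires verifying each M\"{o}bius sub-case against all three conditions (I checked that your derived bound $(k+1)\Theta(\infty,f)+\Theta(0,f)+\delta_{1+k}(0,f)\le k+2$ does contradict each of (\ref{e1.6})--(\ref{e1.8}) via $\Theta(0,f)\ge\delta_2(0,f)$ and $\delta_{1+k}\ge\delta_{2+k}$), while the paper's lemma-based route avoids that bookkeeping. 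The only caveat is that your Case $H\not\equiv 0$ is still a template with unspecified constants $c_i(k,l)$; as you yourself note, the sharp thresholds live entirely in that bookkeeping, so the proposal is a correct plan rather than a finished proof.
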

In view of {\it Lemma \ref{l1.1}}, stated latter on, we see that {\it Theorem F} is better than {\it Theorem D} for $n=1$ case. Now, we recall the following definition.
\begin{defi} (\cite{4}) Let $n_{0j},n_{1j},\ldots,n_{kj}$ be non-negative integers. The expression $$M_{j}[f]=(f)^{n_{0j}}(f^{(1)})^{n_{1j}}\ldots(f^{(k)})^{n_{kj}}$$ is called a \emph{differential monomial} generated by $f$ of degree $d_{M_{j}}=d(M_{j})=\sum_{i=0}^{k}n_{ij}$ and weight $\Gamma_{M_{j}}=\sum_{i=0}^{k}(i+1)n_{ij}$. The sum
$$P[f]=\sum_{j=1}^{t}b_{j}M_{j}[f]$$ is called a \emph{differential polynomial} generated by $f$ of degree $\ol{d}(P)=\max\{d(M_{j}):1\leq j\leq t\}$
and weight $\Gamma=\Gamma_{P}=\max\{\Gamma_{M_{j}}:1\leq j\leq t\}$, where $T(r,b_{j})=S(r,f)$ for $j=1,2,\ldots,t$.\par

The numbers $\underline{d}(P)=\min\{d(M_{j}):1\leq j\leq t\}$ and $k$ (the highest order of the derivative of $f$ in $P[f]$) are called respectively the lower degree and order of $P[f]$.\par

The differential polynomial $P[f]$ is said to be homogeneous if $\ol{d}(P)$=$\underline{d}(P)$, otherwise $P[f]$ is called a non-homogeneous differential polynomial.\par
Also, we define $Q:=\max\; \{\Gamma _{M_{j}}-d(M_{j}): 1\leq j\leq t\}$; and for the sake of convenience for a differential monomial $M[f]$, we denote by  $\lambda =\Gamma_{M}-d_{M}$.
\end{defi}
Recently Charak and Lal (\cite{2a}) considered the possible extension of {\it Theorem D} in the direction of the question of Zhang and L\"{u} (\cite{11}) up to differential polynomial.
\begin{theoG} (\cite{2a}) Let $f$ be a non-constant meromorphic function and $n$  be a positive integer and $a(z) (\not\equiv 0,\infty )$ be a small function with respect to $f$. Let $P[f]$ be a non-constant differential polynomial in $f$. Suppose $f^{n}$ and $P[f]$ share $(a,l)$. If $l\geq 2$ and \par
\be \label {e1.9}(3+Q)\Theta(\infty,f)+2\Theta(0,f)+\ol{d}(P)\delta(0,f) > Q+5+2\ol{d}(P)-\underline{d}(P)-n, \ee\\
or, $l=1$ and \par
\be \label {e1.10} \left(\frac{7}{2}+Q\right)\Theta(\infty,f)+\frac{5}{2}\Theta(0,f)+\ol{d}(P)\delta(0,f) > Q+6+2\ol{d}(P)-\underline{d}(P)-n, \ee\\
or, $l=0$ and \par
\be \label {e1.11} (6+2Q)\Theta(\infty,f)+4\Theta(0,f)+2\ol{d}(P)\delta(0,f) > 2Q+4\ol{d}(P)-2\underline{d}(P)+10-n, \ee\\
then $f^{n} \equiv P[f]$.
\end{theoG}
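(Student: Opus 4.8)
The plan is to carry out the standard weighted-sharing scheme organised around Lahiri's auxiliary function, together with the extra Nevanlinna bookkeeping that the differential polynomial $P[f]$ forces. Write $F:=f^{n}/a$ and $G:=P[f]/a$; then $F$ and $G$ share $(1,l)$, and the assertion $f^{n}\equiv P[f]$ is exactly $F\equiv G$. The preliminary step --- and, at this level of generality, the bulk of the technical work --- is to rewrite every Nevanlinna function of $F$ and $G$ in terms of $T(r,f)$ and of $\Theta(\infty,f)$, $\Theta(0,f)$, $\delta(0,f)$. Here one uses $T(r,f^{n})=nT(r,f)+S(r,f)$ together with the value-distribution of a differential polynomial: a pole of $f$ of multiplicity $q$ is a pole of $P[f]$ of multiplicity at most $\ol{d}(P)\,q+Q$, a zero of $f$ of multiplicity $p$ is a zero of each monomial $M_{j}[f]$ of multiplicity at least $\underline{d}(P)\,p-Q$, and the logarithmic-derivative lemma controls the proximity function of $P[f]$, the discrepancy $\ol{d}(P)-\underline{d}(P)$ entering through the non-homogeneous part. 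These provide the bounds for $N_{2}(r,0;F)$, $N_{2}(r,0;G)$, $N_{2}(r,\infty;F)$, $N_{2}(r,\infty;G)$ and for the ramification terms $\ol N_{0}(r,0;F')$, $\ol N_{0}(r,0;G')$ (zeros of the derivatives lying off $\{0,1,\infty\}$) that the rest of the proof consumes, each appearing as a multiple --- with coefficient assembled from $\ol{d}(P)$, $\underline{d}(P)$, $Q$ --- of $(1-\Theta(\infty,f))T(r,f)$, $(1-\Theta(0,f))T(r,f)$ or $(1-\delta(0,f))T(r,f)$.

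Next, introduce
\[
\Phi=\left(\frac{F''}{F'}-\frac{2F'}{F-1}\right)-\left(\frac{G''}{G'}-\frac{2G'}{G-1}\right),
\]
and split into two cases. Suppose first $\Phi\not\equiv0$. Then $\Phi$ has at most simple poles, located among the multiple poles of $F$ and $G$, the multiple common $1$-points, and the zeros of $F'$ and $G'$ lying off $\{0,1,\infty\}$; moreover $m(r,\Phi)=S(r,f)$, since $\Phi$ is built from logarithmic derivatives, and each simple common $1$-point of $F$ and $G$ is a zero of $\Phi$, so
\[
N^{1)}_{E}(r,1;F)\le N(r,0;\Phi)\le N(r,\infty;\Phi)+S(r,f).
\]
Combining this with the second fundamental theorem for $F$ and for $G$, with the decomposition of $\ol N(r,1;F)$ and $\ol N(r,1;G)$ into the pieces $N^{1)}_{E}$, $\ol N^{(2}_{E}$ and $\ol N_{L}$, with the weight-$l$ relations among those pieces (this is exactly where the regimes $l\ge2$, $l=1$, $l=0$ produce the three different numerical thresholds, through the familiar $\tfrac12\ol N_{L}$ and $\ol N_{L}$ penalties), and with the estimates of the previous paragraph, I expect to reach an inequality of the form
\[
n\,T(r,f)\le\Psi\big(\Theta(\infty,f),\Theta(0,f),\delta(0,f);Q,\ol{d}(P),\underline{d}(P)\big)\,T(r,f)+S(r,f)
\]
for an explicit linear expression $\Psi$; rearranging it contradicts precisely (\ref{e1.9}), (\ref{e1.10}) or (\ref{e1.11}). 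Hence $\Phi\equiv0$.

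If $\Phi\equiv0$, integrating twice produces a M\"{o}bius relation $\dfrac{1}{F-1}=\dfrac{A}{G-1}+B$ with small functions $A(\not\equiv0)$ and $B$. I would then run the standard subcase analysis on $A$ and $B$: apart from the degenerate case $B\equiv0$, $A\equiv1$, which is $F\equiv G$, each case forces $F$ or $G$ to omit a value, or forces $f^{n}$ to assume a small-function value too frequently, producing a surplus of $\ol N(r,0;f)$ or $\ol N(r,\infty;f)$ incompatible with (\ref{e1.9})--(\ref{e1.11}). Therefore $F\equiv G$, that is, $f^{n}\equiv P[f]$.

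The step I expect to be the genuine obstacle is the case $\Phi\not\equiv0$, and within it the exact accounting of the zeros and poles of $P[f]$ against those of $f$: one has to keep the dependence on $\underline{d}(P)$, $\ol{d}(P)$ and $Q$ sharp simultaneously in $N_{2}(r,0;G)$, in $N_{2}(r,\infty;G)$, in the proximity contributions and in the ramification term coming from $G'$, so that the final contradiction emerges with exactly the constants $Q+5+2\ol{d}(P)-\underline{d}(P)-n$, $Q+6+2\ol{d}(P)-\underline{d}(P)-n$ and $2Q+4\ol{d}(P)-2\underline{d}(P)+10-n$. Once these estimates are fixed, the case $\Phi\equiv0$ and the weight-dependent counting are entirely routine.
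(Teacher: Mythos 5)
A preliminary remark: Theorem G is quoted in this paper from Charak and Lal \cite{2a}; the paper itself does not prove it, but it proves its own Theorem \ref{t1} by exactly the scheme you outline, so your architecture --- the auxiliary function $H$, the second fundamental theorem, the weight-dependent $\ol{N}_{L}$ penalties for $l\geq 2$, $l=1$, $l=0$, Chuang's inequality to control $m\left(r,P[f]/f^{\ol{d}(P)}\right)$ and hence the $\ol{d}(P)-\underline{d}(P)$ discrepancy, and Mokhon'ko's lemma for $T(r,f^{n})$ --- is the right one and matches the machinery used here. Your multiplicity estimates for the zeros and poles of $P[f]$ against those of $f$ are also correct as stated.

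The genuine gap is in the case $\Phi\equiv 0$. Integration gives $\frac{1}{G-1}=\frac{A}{F-1}+B$ with constants $A\neq 0$ and $B$, and the subcase $A=B=-1$ yields $FG\equiv 1$, i.e. $f^{n}P[f]\equiv a^{2}$. You propose to dispose of every subcase other than $F\equiv G$ by exhibiting ``a surplus of $\ol N(r,0;f)$ or $\ol N(r,\infty;f)$ incompatible with (\ref{e1.9})--(\ref{e1.11})''. That fails precisely for $FG\equiv 1$: in that situation $f$ has essentially no zeros (a zero of $f$ away from the zeros and poles of $a$ and of the coefficients would force $P[f]=a^{2}/f^{n}$ to have a pole at a point where $P[f]$ is regular) and essentially no poles (a pole of $f$ is a pole of $f^{n}P[f]=a^{2}$), so $\Theta(0,f)=\Theta(\infty,f)=\delta(0,f)=1$ and the hypotheses (\ref{e1.9})--(\ref{e1.11}) are comfortably \emph{satisfied}, not violated; no counting surplus is available. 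This subcase needs a separate argument, namely the one in Lemma \ref{l10}: compute $T\left(r,P[f]/f^{\ol{d}(P)}\right)=(n+\ol{d}(P))T(r,f)+S(r,f)$ from $P[f]/f^{\ol{d}(P)}=a^{2}f^{-n-\ol{d}(P)}$ via Lemma \ref{l4}, and bound the same quantity above by $\ol{d}(P)\,T(r,f)+S(r,f)$ using Lemmas \ref{l2.4} and \ref{l5} together with $N(r,0;f\mid\geq k+1)=S(r,f)$ and $\ol N(r,\infty;f)=S(r,f)$; the resulting contradiction $n\leq 0$ finishes it. With that subcase repaired, and with the deferred numerical bookkeeping actually carried out (your sketch stops at ``I expect to reach an inequality of the form\ldots''), the plan does lead to the stated constants.
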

Clearly, this is a supplementary result corresponding to {\it Theorem D} because by putting $P[f]=f^{(k)}$ in {\it Theorem G} one can't obtain {\it Theorem D}, rather in this case a set of stronger conditions are obtained  as a particular case of {\it Theorem F}. So the following question is natural:
\begin {question}
\label{q1}
\emph{Is it possible to improve {\it Theorem D} in the direction of {\it Theorem F} up to differential monomial so that the result give a positive answer to the question of Zhang and L\"{u}? }
\end{question}
To answer the above question, recently Banerjee and Chakraborty (\cite{bc3}) obtained the following Theorem:
\begin{theoH}(\cite{bc3}) Let $ k(\geq 1)$, $n(\geq 1)$ be integers and $f$ be a non-constant meromorphic function. Also, let $M[f]$ be a differential monomial of degree $d_{M}$ and weight $\Gamma_{M}$ and $k$ is the highest derivative in $M[f]$. Let $a(z) (\not\equiv 0,\infty )$ be a small function with respect to $f$. Suppose $f^{n}-a$ and $M[f]-a$ share $(0,l)$. If $l\geq 2$ and \par
\be \label{bce1.12} (3+\lambda)\Theta(\infty,f)+\mu_{2}\delta_{\mu_{2}^{*}}(0,f)+d_{M}\delta_{2+k}(0,f) > 3+\Gamma_{M}+\mu_{2}-n, \ee\\
or $l=1$ and \par
\be\label {bce1.13}  (\frac{7}{2}+\lambda)\Theta(\infty,f)+\frac{1}{2}\Theta(0,f)+\mu_{2}\delta_{\mu_{2}^{*}}(0,f)+d_{M}\delta_{2+k}(0,f) > 4+\Gamma_{M}+\mu_{2}-n, \ee\\
or $l=0$ and \par
\be \label {bce1.14} (6+2\lambda)\Theta(\infty,f)+2\Theta(0,f)+\mu_{2}\delta_{\mu_{2}^{*}}(0,f)+d_{M}\delta_{2+k}(0,f)+d_{M}\delta_{1+k}(0,f) > 8+2\Gamma_{M}+\mu_{2}-n, \ee\\
then $f^{n} \equiv M[f]$ .
\end{theoH}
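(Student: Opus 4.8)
The plan is to run the standard weighted value-sharing machinery; the only genuinely new ingredient is propagating the information carried by the monomial $M[f]$ back to $f$. First, a reduction: expanding $\Theta(a,f)=1-\limsup_{r}\ol N(r,a;f)/T(r,f)$ and $\delta_{p}(0,f)=1-\limsup_{r}N_{p}(r,0;f)/T(r,f)$, and using $\Gamma_{M}=\lambda+d_{M}$, one checks that each of \eqref{bce1.12}, \eqref{bce1.13}, \eqref{bce1.14} implies an inequality of the form
$$n>\limsup_{r\to\infty}\frac{\Phi_{l}(r)}{T(r,f)},$$
where $\Phi_{2}(r)=(3+\lambda)\ol N(r,\infty;f)+\mu_{2}N_{\mu_{2}^{*}}(r,0;f)+d_{M}N_{2+k}(r,0;f)$, while $\Phi_{1}$ is obtained from $\Phi_{2}$ by raising the first coefficient to $\tfrac{7}{2}+\lambda$ and adding $\tfrac{1}{2}\ol N(r,0;f)$, and $\Phi_{0}$ by raising it to $6+2\lambda$, adding $2\ol N(r,0;f)$, and adjoining the term $d_{M}N_{1+k}(r,0;f)$. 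Hence it suffices to show that $f^{n}\not\equiv M[f]$ forces $nT(r,f)\le\Phi_{l}(r)+S(r,f)$ for the relevant $l$. Put $\mathcal F=f^{n}/a$ and $\mathcal G=M[f]/a$, so that $\mathcal F-1$ and $\mathcal G-1$ share $(0,l)$; then $\ol N(r,\infty;\mathcal F)=\ol N(r,\infty;\mathcal G)=\ol N(r,\infty;f)+S(r,f)$, $T(r,\mathcal F)=nT(r,f)+S(r,f)$, $N_{2}(r,0;\mathcal F)\le\mu_{2}N_{\mu_{2}^{*}}(r,0;f)+S(r,f)$ by the inequality in Definition~1.4, and the zeros and poles of $M[f]$ are controlled by those of $f$ through transfer estimates bounding $N_{1}(r,0;\mathcal G)$ and $N_{2}(r,0;\mathcal G)$ in terms of $d_{M}N_{1+k}(r,0;f)$, $d_{M}N_{2+k}(r,0;f)$ and $\ol N(r,\infty;f)$, and bounding $N_{2}(r,\infty;\mathcal G)$ and $T(r,\mathcal G)$ in terms of $\ol N(r,\infty;f)$ and $T(r,f)$; in particular $S(r,\mathcal F)=S(r,\mathcal G)=S(r,f)$.

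Now set
$$H=\left(\frac{\mathcal F''}{\mathcal F'}-\frac{2\mathcal F'}{\mathcal F-1}\right)-\left(\frac{\mathcal G''}{\mathcal G'}-\frac{2\mathcal G'}{\mathcal G-1}\right),$$
so that $m(r,H)=S(r,f)$ and $H$ has at most simple poles. Suppose $H\not\equiv0$. A common simple $1$-point of $\mathcal F$ and $\mathcal G$ is a zero of $H$, and a common $1$-point of equal multiplicity at least $2$ is not a pole of $H$; tracking the poles of $H$, feeding the resulting estimate for $N^{1)}_{E}(r,1;\mathcal F)$ into the second fundamental theorem (retaining the ramification term) for $\mathcal F$, and expressing $\ol N(r,1;\mathcal F)$ through $N^{1)}_{E}$, $\ol N^{(2}_{E}$ and the two $\ol N_{L}$ terms, one is led — as usual in the weighted-sharing calculus — to an inequality
$$T(r,\mathcal F)\le N_{2}(r,0;\mathcal F)+N_{2}(r,0;\mathcal G)+N_{2}(r,\infty;\mathcal F)+N_{2}(r,\infty;\mathcal G)+\Sigma_{l}(r)+S(r,f),$$
where $\Sigma_{l}\equiv0$ for $l\ge2$, while for $l=1$ and $l=0$ the correction $\Sigma_{l}$ is the standard one (of order $\tfrac{1}{2}(\ol N(r,\infty;f)+\ol N(r,0;f))$, respectively of order $(3+\lambda)\ol N(r,\infty;f)+2\ol N(r,0;f)+d_{M}N_{1+k}(r,0;f)$), the $\ol N_{L}$ and $\ol N^{(2}_{E}$ terms being absorbed into it. Substituting $T(r,\mathcal F)=nT(r,f)+S(r,f)$, $N_{2}(r,0;\mathcal F)\le\mu_{2}N_{\mu_{2}^{*}}(r,0;f)+S(r,f)$, $N_{2}(r,\infty;\mathcal F)+N_{2}(r,\infty;\mathcal G)\le4\ol N(r,\infty;f)+S(r,f)$, and the transfer estimate for $N_{2}(r,0;\mathcal G)$ in the sharp form that makes the coefficient of $\ol N(r,\infty;f)$ total $3+\lambda$ (resp. $\tfrac{7}{2}+\lambda$, $6+2\lambda$), one obtains exactly $nT(r,f)\le\Phi_{l}(r)+S(r,f)$ — contradicting \eqref{bce1.12}, \eqref{bce1.13} or \eqref{bce1.14}. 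Therefore $H\equiv0$.

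Integrating $H\equiv0$ twice gives
$$\frac{1}{\mathcal F-1}=\frac{A}{\mathcal G-1}+B$$
for constants $A\,(\ne0)$ and $B$. If $B=0$ and $A=1$, then $\mathcal F\equiv\mathcal G$, i.e. $f^{n}\equiv M[f]$, and we are done. Otherwise this is a non-degenerate M\"{o}bius relation between $\mathcal F$ and $\mathcal G$, so $T(r,\mathcal F)=T(r,\mathcal G)+O(1)$ and there is a value $c\notin\{0,1\}$ with $\ol N(r,c;\mathcal G)$ equal to $\ol N(r,\infty;\mathcal F)$ (when $B\ne0$) or to $\ol N(r,0;\mathcal F)$ (when $B=0$, $A\ne1$); applying the second fundamental theorem to $\mathcal G$ for the values $0,1,c$ (together with $\infty$ when $\mathcal G$ has poles) and transferring to $f$ via the estimates above produces a bound on $nT(r,f)$ that violates the relevant one of \eqref{bce1.12}--\eqref{bce1.14} (needed here only in a weak form). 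This proves the theorem.

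The set-up and the case $H\equiv0$ are routine; the real work — and the point where the argument goes beyond the derivative case — is twofold. First, one must carry the $\ol N_{L}$ and $\ol N^{(2}_{E}$ contributions through the estimate with exactly the right coefficients in each weight regime $l\ge2$, $l=1$, $l=0$; this is what produces the $\tfrac{1}{2}\Theta(0,f)$ in \eqref{bce1.13} and the extra $\delta_{1+k}(0,f)$ in \eqref{bce1.14}. Secondly, and more importantly, one must prove the transfer estimates for the monomial $M[f]$ in a form sharp enough that the multiplicity thresholds $1+k$, $2+k$ and the constants $\lambda$, $d_{M}$, $\Gamma_{M}$ emerge precisely as in \eqref{bce1.12}--\eqref{bce1.14} — in particular so that, for $M[f]=f^{(k)}$, the statement collapses to {\it Theorem~D} when $n\ge2$ and to {\it Theorem~F} when $n=1$.
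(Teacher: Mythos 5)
Your architecture agrees with the paper's (the same $F=f^{n}/a$, $G=M[f]/a$, the same auxiliary function $H$, the dichotomy $H\not\equiv 0$ versus $H\equiv 0$, and the second fundamental theorem plus the weighted-sharing lemmas in the first case), and your $\Phi_{l}$ is indeed what the paper's Subcases 1.1.1, 1.1.2 and 1.2 produce. But the two places you leave as assertions are exactly where the content lies, and one of them is handled incorrectly. The transfer estimate $N_{2}(r,0;G)\leq d_{M}N_{2+k}(r,0;f)+\lambda\,\overline{N}(r,\infty;f)+S(r,f)$ (and its companion with threshold $1+k$) is not part of the ``standard calculus''; it is the new lemma of the whole argument. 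The paper derives it from a chain of three lemmas: a bound for $N(r,0;P[f])$ via $m\bigl(r,P[f]/f^{\overline{d}(P)}\bigr)$, a multiplicity-comparison lemma for zeros of $f$ (where the case split $t\leq k$ versus $t\geq k+1$ for a zero of order $t$, and hence a hypothesis of the form $\Gamma_{P}>(k+1)\underline{d}(P)-(p+1)$, enters), and the identity $N_{p}=\overline{N}+\sum_{j\geq 2}^{p}\overline{N}_{(j}$. You correctly flag this as ``the real work'' but do not supply it, so what you have is an outline rather than a proof.

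The second, sharper gap is in the case $H\equiv 0$. From $\frac{1}{\mathcal F-1}=\frac{A}{\mathcal G-1}+B$ one gets $\mathcal G=\frac{(B+1)\mathcal F+A-B-1}{B\mathcal F+A-B}$, so for $B\neq 0$ the value taken by $\mathcal G$ at the poles of $\mathcal F$ is $c=(B+1)/B$; your claim that $c\notin\{0,1\}$ fails precisely when $B=-1$, where $c=0$. In the extreme subcase $A=B=-1$ one has $\mathcal F\mathcal G\equiv 1$, i.e. $f^{n}M[f]\equiv a^{2}$, and there your second-fundamental-theorem step degenerates: it yields $T(r,\mathcal G)\leq\overline{N}(r,0;\mathcal F)+\overline{N}(r,\infty;\mathcal F)+\overline{N}(r,1;\mathcal G)+S(r)$, which is vacuous since $\overline{N}(r,1;\mathcal G)\leq T(r,\mathcal G)+O(1)$. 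This identity must be excluded by a separate argument; the paper does so with a dedicated lemma based on Chuang's inequality $m\bigl(r,P[f]/f^{\overline{d}(P)}\bigr)\leq(\overline{d}(P)-\underline{d}(P))m(r,1/f)+S(r,f)$ together with a pole count of $P[f]/f^{\overline{d}(P)}$, forcing the contradiction $(n+\overline{d}(P))T(r,f)\leq\overline{d}(P)T(r,f)+S(r,f)$. (The paper also packages the remaining $H\equiv 0$ analysis differently, via a trichotomy lemma --- either $T(r)\leq N_{2}(r,0;F)+N_{2}(r,0;G)+\overline{N}(r,\infty;F)+\overline{N}(r,\infty;G)+\cdots+S(r)$, or $F\equiv G$, or $FG\equiv 1$ --- ruling out the first alternative with the same hypotheses \eqref{bce1.12}--\eqref{bce1.14}; your three-value argument can be made to work, but only after the degenerate M\"{o}bius subcases, above all $FG\equiv 1$, are treated separately.)
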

In the same paper the following question was asked:
\begin{question}
Is it possible to extend \emph{Theorem H} up to differential polynomial instead of differential monomial?
\end{question}
To seek the possible answer of {\it Question 1.3} is the motivation of this paper.
\section{Main result}
\begin{theo}\label{t1} Let $ k(\geq 1)$, $n(\geq 1)$ be integers and $f$ be a non-constant meromorphic function. Let $P[f]$ be a \emph{homogeneous} differential polynomial of degree $\overline{d}(P)$ and weight $\Gamma_{P}$ such that $ \Gamma_{P}> (k+1) \underline{d}(P)- 2$, where $k$ is the highest derivative in $P[f]$. Also, let $a(z) (\not\equiv 0,\infty )$ be a small function with respect to $f$. Suppose $f^{n}-a$ and $P[f]-a$ share $(0,l)$. If $l\geq 2$ and\\
\bea \label{e1.12}   &&\left(\Gamma_{P}-\underline{d}(P)+3\right)\Theta(\infty,f)+\mu_{2}\delta_{\mu_{2}^{*}}(0,f)+\underline{d}(P)\delta_{_{2+\Gamma_{P} -\underline{d}(P)}}(0,f)\\
\nonumber&>& \Gamma_{P}+\mu_{2}+3-n,\eea\\
or, $l=1$ and
\bea\label {e1.13}  &&
\left(\Gamma_{P}-\underline{d}(P)+\frac{7}{2}\right)\Theta(\infty,f)+\frac{1}{2}\Theta(0,f)+\mu_{2}\delta_{\mu_{2}^{*}}(0,f)
+\underline{d}(P)\delta_{_{2+\Gamma_{P}-\underline{d}(P)}}(0,f)\\
\nonumber&>& \Gamma_{P}+\mu_{2}+4-n,\eea\\
or, $l=0$ and
\bea \label {e1.14} &&\left(2(\Gamma_{P} -\underline{d}(P))+6\right)\Theta(\infty,f)+2\Theta(0,f)+\mu_{2}\delta_{\mu_{2}^{*}}(0,f)
+\underline{d}(P)\delta_{_{1+\Gamma_{P} -\underline{d}(P)}}(0,f)\\
\nonumber&&+\underline{d}(P)\delta_{_{2+\Gamma_{P} -\underline{d}(P)}}(0,f)> 2\Gamma_{P} +\mu_{2}+8-n,\eea\\
then $f^{n} \equiv P[f]$.
\end{theo}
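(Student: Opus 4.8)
The plan is to run the weighted-sharing machinery behind \emph{Theorems D}, \emph{F} and \emph{H}, supplying the extra estimates that a \emph{homogeneous} differential polynomial demands. First I would set $F=\dfrac{f^{n}}{a}$ and $G=\dfrac{P[f]}{a}$, so that $F,G$ are non-constant meromorphic functions sharing $(1,l)$ and the desired conclusion $f^{n}\equiv P[f]$ is equivalent to $F\equiv G$. Before the main argument I would assemble the dictionary that converts statements about $F,G$ back to $f$: (i) since $P[f]$ is homogeneous (so $\ol d(P)=\underline{d}(P)$), the lemma on logarithmic derivatives gives $m\!\left(r,\dfrac{P[f]}{f^{\underline{d}(P)}}\right)=S(r,f)$, hence $T(r,G)=O(T(r,f))$, together with $\ol N(r,\infty;P[f])=\ol N(r,\infty;f)+S(r,f)$ and $N(r,\infty;P[f])\le \underline{d}(P)\,N(r,\infty;f)+(\Gamma_{P}-\underline{d}(P))\,\ol N(r,\infty;f)+S(r,f)$; (ii) the zero estimate $N_{p}(r,0;P[f])\le \underline{d}(P)\,N_{\,p+\Gamma_{P}-\underline{d}(P)}(r,0;f)+(\Gamma_{P}-\underline{d}(P))\,\ol N(r,\infty;f)+S(r,f)$, obtained by localising at the zeros of $f$ and controlling the at most $\Gamma_{P}-\underline{d}(P)$ ``spurious'' zeros of $P[f]/f^{\underline{d}(P)}$ through $N\!\left(r,\infty;\dfrac{P[f]}{f^{\underline{d}(P)}}\right)$ — and it is precisely here that the structural hypothesis $\Gamma_{P}>(k+1)\underline{d}(P)-2$ must be invoked in order to keep this estimate in the stated shape; (iii) the inequality $N_{p}(r,0;f^{n})\le \mu_{p}\,N_{\mu_{p}^{*}}(r,0;f)$ from \emph{Definition 1.4}; and (iv) $T(r,f^{n})=nT(r,f)+O(1)$, which also makes all error terms $S(r,F)$, $S(r,G)$ coincide with $S(r,f)$.

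Next I would introduce
\be\label{Hdef}
H=\left(\frac{F''}{F'}-\frac{2F'}{F-1}\right)-\left(\frac{G''}{G'}-\frac{2G'}{G-1}\right),
\ee
and split into the cases $H\not\equiv0$ and $H\equiv0$. In the case $H\not\equiv0$, every simple common $1$-point of $F$ and $G$ is a zero of $H$, and since $m(r,H)=S(r,f)$ one gets $N^{1)}_{E}(r,1;F)\le N(r,\infty;H)+S(r,f)$, the right-hand side being estimated in the standard way by $\ol N(r,0;F\mid \geq 2)+\ol N(r,0;G\mid \geq 2)+\ol N(r,\infty;F)+\ol N(r,\infty;G)+\ol N_{L}(r,1;F)+\ol N_{L}(r,1;G)+S(r,f)$. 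Feeding this into the second fundamental theorem for $F$ and for $G$, and using the $(1,l)$-sharing to absorb the $\ol N_{L}$, $N^{1)}_{E}$ and $\ol N^{(2}_{E}$ contributions — it is the three cases $l\ge2$, $l=1$, $l=0$ that generate the three coefficient regimes — one arrives, after the standard manipulations, at an inequality bounding $T(r,F)$ (equivalently $nT(r,f)$ by (iv)) by a weighted combination of $N_{2}(r,0;F)$, $N_{2}(r,0;G)$, $\ol N(r,\infty;F)$ and $\ol N(r,\infty;G)$ plus $S(r,f)$. Now I would rewrite each term via (i)--(iv), divide by $T(r,f)$ and let $r\to\infty$ off the exceptional set: the coefficient of $\Theta(\infty,f)$ that emerges is $\Gamma_{P}-\underline{d}(P)+3$ (resp.\ $\tfrac72+\Gamma_{P}-\underline{d}(P)$, resp.\ $2(\Gamma_{P}-\underline{d}(P))+6$), the zero terms collect into $\mu_{2}\delta_{\mu_{2}^{*}}(0,f)+\underline{d}(P)\,\delta_{2+\Gamma_{P}-\underline{d}(P)}(0,f)$, with an additional $\tfrac12\Theta(0,f)$ when $l=1$ and an additional $2\Theta(0,f)+\underline{d}(P)\,\delta_{1+\Gamma_{P}-\underline{d}(P)}(0,f)$ when $l=0$, and the inequality obtained is exactly the negation of (\ref{e1.12}), (\ref{e1.13}) or (\ref{e1.14}) — a contradiction. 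Hence $H\equiv0$.

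If $H\equiv0$, integrating (\ref{Hdef}) twice yields $\dfrac{1}{F-1}=\dfrac{A}{G-1}+B$ for some constants $A(\neq0)$, $B$, that is, $F=\dfrac{(B+1)G+(A-B-1)}{BG+(A-B)}$. The ensuing case analysis is the same in structure as in the proof of \emph{Theorem H}: one plays the pole divisor of $F$ (which equals that of $f$ up to $S(r,f)$) against that of $G$, and the zero divisor of $F$ against that of $G$, invoking the second fundamental theorem for $F$ and $G$ together with the deficiency lower bounds packaged in (\ref{e1.12})--(\ref{e1.14}) through $\Theta(0,f)$, $\Theta(\infty,f)$ and $\delta_{2+\Gamma_{P}-\underline{d}(P)}(0,f)$. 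Concretely, $B\neq0$ forces the poles of $F$ to lie where $G$ takes a finite value, whence $f$ must be entire and $P[f]$ omits a finite value, which leads via the second fundamental theorem and the deficiency hypotheses to a contradiction; and $B=0$ gives $F=\alpha G+\beta$ with $\alpha\neq0$, where the sub-case $\alpha\neq1$ (equivalently $\beta\neq0$) again contradicts (\ref{e1.12})--(\ref{e1.14}) after applying the second fundamental theorem to $f^{n}$ and to $P[f]$. Every branch other than $F\equiv G$ being excluded, we conclude $F\equiv G$, i.e.\ $f^{n}\equiv P[f]$.

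I expect the genuine work to lie in the bookkeeping of the case $H\not\equiv0$: matching the three coefficient regimes of the theorem requires the weighted-sharing decomposition of the $1$-points (the $N^{1)}_{E}$, $\ol N^{(2}_{E}$, $\ol N_{L}$ identities) to be combined with precision against the homogeneous-polynomial estimates (i)--(ii), and it is exactly at that junction that the hypothesis $\Gamma_{P}>(k+1)\underline{d}(P)-2$ has to be used, so that $N_{2}(r,0;P[f])$ is dominated by $N_{2+\Gamma_{P}-\underline{d}(P)}(r,0;f)$ rather than by a term of larger weight. By comparison the $H\equiv0$ part is essentially routine, following the lines of \emph{Theorem H}.
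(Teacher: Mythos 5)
Your proposal follows the same route as the paper: the auxiliary function $H$, the dichotomy $H\not\equiv 0$ versus $H\equiv 0$, the second fundamental theorem combined with the weighted-sharing decomposition of the $1$-points in the three regimes $l\ge 2$, $l=1$, $l=0$, and the key zero-estimate $N_{p}(r,0;P[f])\le N_{p+\Gamma_{P}-\underline{d}(P)}(r,0;f^{\underline{d}(P)})+(\Gamma_{P}-\underline{d}(P))\overline{N}(r,\infty;f)+S(r,f)$, which is exactly where homogeneity and the hypothesis $\Gamma_{P}>(k+1)\underline{d}(P)-2$ enter (Lemma \ref{l9}); your coefficient bookkeeping reproduces (\ref{e1.12})--(\ref{e1.14}) correctly. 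The only substantive difference is in the case $H\equiv 0$: the paper invokes a packaged alternative (Lemma \ref{l14}) for functions sharing $(1,\infty)$ and $(\infty,0)$, rules out the counting-function alternative using condition (\ref{e1.12}), and rules out $FG\equiv 1$ by a separate lemma (Lemma \ref{l10}), whereas you unpack the M\"{o}bius relation $\frac{1}{F-1}=\frac{A}{G-1}+B$ by hand; that is a legitimate equivalent route. However, one point your sketch glosses over: inside your branch $B\ne 0$ sits the sub-case $A=B=-1$, i.e.\ $FG\equiv 1$, i.e.\ $f^{n}P[f]\equiv a^{2}$, and this is \emph{not} eliminated by ``the second fundamental theorem and the deficiency hypotheses'' (for instance $f=e^{z}$ satisfies every deficiency condition with value $1$, so (\ref{e1.12})--(\ref{e1.14}) give no contradiction there). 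It requires the separate argument of Lemma \ref{l10}: from $f^{n}P[f]\equiv a^{2}$ one gets $N(r,0;f\mid\ge k+1)=S(r,f)$, and the Chuang-type estimates for $m\left(r,P[f]/f^{\overline{d}(P)}\right)$ and $N\left(r,\infty;P[f]/f^{\overline{d}(P)}\right)$ then force $(n+\overline{d}(P))T(r,f)\le\overline{d}(P)T(r,f)+S(r,f)$, a contradiction. With that sub-case supplied, your argument is complete and matches the paper's.
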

\begin{rem} If $P[f]$ be a non-constant differential monomial, then $\underline{d}(P)=\overline{d}(P)$. Thus our Theorem extends, generalizes Theorem H.
\end{rem}
From the above discussion, the following question is obvious:
\begin{question}
Is it possible to extend Theorem \ref{t1} up to an arbitrary differential polynomial?
\end{question}
\section{Lemmas} In this section, we present some lemmas which will be needed in this sequel. Let $F$, $G$ be two non-constant meromorphic functions and  $H$ be another meromorphic function which is defined as follows:
\be\label{e2.1} H=\left(\frac{\;\;F^{''}}{F^{'}}-\frac{2F^{'}}{F-1}\right)-\left(\frac{\;\;G^{''}}{G^{'}}-\frac{2G^{'}}{G-1}\right).\ee
\begin{lem}\label{l1.1}(\cite{bc3}) If $f$ is a non-constant meromorphic function, then
$$1+\delta_{2}(0,f) \geq 2\Theta(0,f).$$
\end{lem}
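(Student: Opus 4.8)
The statement follows at once from the definitions, so the plan is simply to compare the two truncated counting functions of the zeros of $f$ at each radius and then pass to the limit superior. The key observation is that the ``doubly counted'' piece of $N_{2}(r,0;f)$ is dominated by its ``once counted'' piece. Indeed, by definition
\[ N_{2}(r,0;f)=\ol N(r,0;f)+\ol N(r,0;f\mid\geq 2), \]
and since the zeros of $f$ of multiplicity at least $2$ form a subset of the zeros of $f$, one trivially has $\ol N(r,0;f\mid\geq 2)\le \ol N(r,0;f)$; hence
\[ N_{2}(r,0;f)\ \le\ 2\,\ol N(r,0;f)\qquad\text{for every }r. \]

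I would then divide by $T(r,f)>0$ and take $\limsup_{r\to\infty}$. Since the constant $2$ is positive it comes out of the $\limsup$, so
\[ \limsup_{r\to\infty}\frac{N_{2}(r,0;f)}{T(r,f)}\ \le\ 2\,\limsup_{r\to\infty}\frac{\ol N(r,0;f)}{T(r,f)}=2\,\bigl(1-\Theta(0,f)\bigr), \]
the last equality being the definition of $\Theta(0,f)$ (equivalently $\delta_{1}(0,f)=\Theta(0,f)$ from Definition 1.3). Recalling $\delta_{2}(0,f)=1-\limsup_{r\to\infty}N_{2}(r,0;f)/T(r,f)$, this rearranges to $\delta_{2}(0,f)\ge 1-2\bigl(1-\Theta(0,f)\bigr)=2\Theta(0,f)-1$, i.e.\ $1+\delta_{2}(0,f)\ge 2\Theta(0,f)$, as required.

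There is no genuine obstacle in this argument. The only mild points of care are that the pointwise inequality $N_{2}(r,0;f)\le 2\,\ol N(r,0;f)$ must be in hand for all (large) $r$ before comparing the limits superior, and that one identifies the right-hand side via $\delta_{1}(0,f)=\Theta(0,f)$. It is worth noting that the estimate is sharp: if every zero of $f$ is multiple then $\ol N(r,0;f\mid\geq 2)=\ol N(r,0;f)$, whence $N_{2}(r,0;f)=2\,\ol N(r,0;f)$ identically and $1+\delta_{2}(0,f)=2\Theta(0,f)$; this is precisely the phenomenon that underlies the clean comparison between \emph{Theorem F} (stated with $\delta_{2}(0,f)$) and \emph{Theorem D} (stated with $\Theta(0,f)$) alluded to after the statement of \emph{Theorem F}.
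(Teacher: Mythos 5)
Your proof is correct and is the standard argument for this fact; the paper itself only cites \cite{bc3} for the lemma, and the proof there proceeds exactly as you do, via $N_{2}(r,0;f)=\ol N(r,0;f)+\ol N(r,0;f\mid\geq 2)\leq 2\ol N(r,0;f)$ followed by dividing by $T(r,f)$ and taking $\limsup$. Nothing further is needed.
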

\begin{lem}\label{l11}(\cite{1}) If $F$ and $G$ share $(1,l)$, $\overline{N}(r,\infty;F)=\overline{N}(r,\infty;G)$ and $H\not\equiv 0$, then
\beas &&N(r,\infty;H)\\
  &\leq& \overline{N}(r,\infty;F)+\overline{N}(r,0;F|\geq 2)+\overline{N}(r,0;G|\geq 2)+\overline{N}_{0}(r,0;F')+\overline{N}_{0}(r,0;G')\\
 &&+\overline{N}_{L}(r,1;F)+\overline{N}_{L}(r,1;G)+S(r,F)+S(r,G). \eeas
 \end{lem}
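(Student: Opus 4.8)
The plan is to estimate $N(r,\infty;H)$ by a direct pole-counting argument: one controls $H$ completely through a purely local computation at every point where a pole could occur. First I would rewrite
\[
H=\bigl((\log F')'-2(\log(F-1))'\bigr)-\bigl((\log G')'-2(\log(G-1))'\bigr),
\]
which exhibits $H$ as a difference of logarithmic derivatives. Since a logarithmic derivative $(\log\varphi)'$ has only simple poles, located precisely at the zeros and poles of $\varphi$, it follows immediately that every pole of $H$ is simple, so $N(r,\infty;H)=\overline N(r,\infty;H)$, and that the poles of $H$ can occur only at poles of $F$ or $G$, at zeros of $F'$ or $G'$, or at $1$-points of $F$ or $G$ (recall that sharing $(1,l)$ forces $F$ and $G$ to have exactly the same set of $1$-points).

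The core of the argument is then the local Laurent expansion at each type of candidate point $z_{0}$, for which the only fact needed is $(\log\varphi)'\sim\operatorname{ord}_{z_{0}}(\varphi)/(z-z_{0})$. At a common $1$-point of multiplicity $p$ for $F$ and $q$ for $G$, the two halves of $H$ combine to $\bigl((p-1)-2p\bigr)/(z-z_{0})-\bigl((q-1)-2q\bigr)/(z-z_{0})=(q-p)/(z-z_{0})$; this is holomorphic when $p=q$, so the points counted by $N^{1)}_{E}$ and by $\overline N^{(2}_{E}$ contribute nothing, and it is a genuine simple pole exactly when $p\neq q$, i.e. at the points counted by $\overline N_{L}(r,1;F)+\overline N_{L}(r,1;G)$. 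At a pole of $F$ of order $p$ and of $G$ of order $q$ the contribution is $(p-q)/(z-z_{0})$, at worst a simple pole, accounted for by $\overline N(r,\infty;F)$ (which equals $\overline N(r,\infty;G)$ by hypothesis). At a zero of $F$ of order $m$, the factor $F-1$ is holomorphic and nonvanishing, so only $(\log F')'\sim(m-1)/(z-z_{0})$ survives: a simple zero of $F$ contributes no pole of $H$, while a zero of order $m\geq 2$ gives a simple pole counted by $\overline N(r,0;F\mid\geq 2)$, and symmetrically for $G$. Finally, a zero of $F'$ at a point where $F\notin\{0,1,\infty\}$ produces the simple pole $(\log F')'$, counted by $\overline N_{0}(r,0;F')$, and symmetrically $\overline N_{0}(r,0;G')$ (these last two are mere over-estimates, since a zero common to $F'$ and $G'$ of equal order yields no pole). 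Adding up the surviving contributions and absorbing any remaining negligible terms into $S(r,F)+S(r,G)$ gives the asserted inequality.

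The step I expect to be the only mildly delicate one is the bookkeeping at the $1$-points and the common poles: I would have to check that, under the hypothesis that $F$ and $G$ share $(1,l)$, every configuration of multiplicities at a common $1$-point is correctly sorted among $N^{1)}_{E}$, $\overline N^{(2}_{E}$, $\overline N_{L}(r,1;F)$ and $\overline N_{L}(r,1;G)$; that the cancellations claimed above are exact (including the sign of the residue in the non-cancelling cases); and — this is where $\overline N(r,\infty;F)=\overline N(r,\infty;G)$ is genuinely used — that poles of $H$ lying over poles of $G$ but not over poles of $F$ are not counted twice. All the individual Laurent computations themselves are completely routine.
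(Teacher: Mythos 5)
Your local residue analysis is the standard proof of this lemma (the paper itself only cites it from Banerjee--Majumder without reproducing the argument), and all of your Laurent computations check out: common $1$-points with $p=q$ and simple zeros of $F$ or $G$ cancel, while the remaining simple poles of $H$ are exactly absorbed by the listed counting functions. The only caveat, which you yourself flag, is that the hypothesis $\overline{N}(r,\infty;F)=\overline{N}(r,\infty;G)$ must be read as saying the pole sets of $F$ and $G$ coincide (as they do in the paper's application, where $F$ and $G$ share $(\infty,0)$ up to the zeros and poles of $a$), since a multiple pole of $G$ that is not a pole of $F$ would otherwise need a separate $\overline{N}(r,\infty;G)$ term.
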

\begin{lem}\label{l12} (\cite{bc3}) Let $F$ and $G$ share $(1,l)$. Then
$$\overline{N}_{L}(r,1;F)\leq \frac{1}{2}\overline{N}(r,\infty;F)+\frac{1}{2}\overline{N}(r,0;F)+S(r,F)~~\text{if}~~l\geq 1,$$
  and
$$\overline{N}_{L}(r,1;F)\leq \overline{N}(r,\infty;F)+\overline{N}(r,0;F)+S(r,F)~~\text{if}~~l=0.$$
Similar expressions also hold for $G$.
\end{lem}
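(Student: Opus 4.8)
The plan is to realise every point counted in $\overline{N}_{L}(r,1;F)$ as a zero of $F'$ that avoids the zeros and poles of $F$, and then to control such zeros by means of the lemma on the logarithmic derivative. The only input I need about the weighted sharing is a lower bound on the multiplicity of $F$ at such points.

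First I would record the multiplicity information forced by the hypothesis. Let $z_{0}$ be a $1$-point of $F$ of multiplicity $p$ and of $G$ of multiplicity $q$ that is counted in $\overline{N}_{L}(r,1;F)$, so that $p>q\geq 1$. Sharing $(1,l)$ means $E_{l}(1;F)=E_{l}(1;G)$, i.e. $\min(p,l+1)=\min(q,l+1)$ at every common $1$-point. When $l\geq 1$, this equality together with $p>q$ forces $q\geq l+1\geq 2$, whence $p\geq q+1\geq 3$; consequently $F'$ vanishes at $z_{0}$ to order $p-1\geq 2$. When $l=0$ (IM sharing) one only gets $p\geq 2$, so $F'$ vanishes to order $p-1\geq 1$. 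In either case $F(z_{0})=1$, so $z_{0}$ is neither a zero nor a pole of $F$.

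Next I would invoke the standard consequence of the lemma on the logarithmic derivative. Since $F'/F$ has only simple poles, located exactly at the zeros and poles of $F$, we have $N(r,\infty;F'/F)=\overline{N}(r,0;F)+\overline{N}(r,\infty;F)$, while $m(r,F'/F)=S(r,F)$; hence the first fundamental theorem yields
\[
N\!\left(r,0;\tfrac{F'}{F}\right)\leq T\!\left(r,\tfrac{F'}{F}\right)+O(1)=\overline{N}(r,0;F)+\overline{N}(r,\infty;F)+S(r,F).
\]
The zeros of $F'/F$ are precisely the zeros of $F'$ at which $F\neq 0,\infty$, counted with multiplicity, and each point of $\overline{N}_{L}(r,1;F)$ is such a zero contributing its full multiplicity $p-1$. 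Feeding in the bounds of the previous step, namely $p-1\geq 2$ when $l\geq 1$ and $p-1\geq 1$ when $l=0$, I obtain $2\,\overline{N}_{L}(r,1;F)\leq N(r,0;F'/F)$ in the first case and $\overline{N}_{L}(r,1;F)\leq N(r,0;F'/F)$ in the second. Combining these with the displayed estimate gives the two asserted inequalities, and the statements for $G$ follow by interchanging $F$ and $G$, the hypothesis being symmetric.

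The argument is essentially routine once the quantity $F'/F$ is chosen, so I do not expect a serious obstacle; the delicate point is the bookkeeping of multiplicities. I must confirm that the $L$-points contribute their exact order $p-1$ (not merely $1$) to $N(r,0;F'/F)$, and that the weighted-sharing hypothesis genuinely forces $p\geq 3$ rather than only $p\geq 2$ when $l\geq 1$ — this is precisely what produces the improved factor $\tfrac12$ in the first inequality. Checking that no $L$-point is accidentally a zero or pole of $F$ is immediate, since $F=1$ there, which closes the only potential gap.
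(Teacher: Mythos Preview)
The paper does not supply its own proof of this lemma; it is quoted from \cite{bc3} without argument. Your proof is correct and is precisely the standard one found in the literature: observe that each point counted in $\overline{N}_{L}(r,1;F)$ is a zero of $F'/F$ of order $p-1\geq 2$ (respectively $\geq 1$) when $l\geq 1$ (respectively $l=0$), and bound $N(r,0;F'/F)$ by $\overline{N}(r,0;F)+\overline{N}(r,\infty;F)+S(r,F)$ via the lemma on the logarithmic derivative. Your verification that weighted sharing with $l\geq 1$ forces $p\geq 3$ (and hence the factor $\tfrac{1}{2}$) is accurate, and the bookkeeping concerns you raise are all handled correctly.
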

\begin{lem}\label{l13}(\cite{bc3}) Let $F$ and $G$ share $(1,l)$ and $H \not\equiv 0$. Then \beas \overline{N}(r,1;F)+ \overline{N}(r,1;G) &\leq& N(r,\infty;H) + \overline{N}^{(2}_{E}(r,1;F)+\overline{N}_{L}(r,1;F)+\overline{N}_{L}(r,1;G)\\ && +\overline{N}(r,1;G)+S(r,F)+S(r,G).\eeas
\end{lem}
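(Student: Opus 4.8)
The plan is to prove the inequality by splitting each reduced counting function $\overline{N}(r,1;F)$ and $\overline{N}(r,1;G)$ according to the multiplicity pattern of the $1$-points permitted by the weighted sharing hypothesis, and then to estimate the simple common $1$-points through the zeros of the auxiliary function $H$ defined in \eqref{e2.1}.

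First I would record the decomposition. Since $F$ and $G$ share $(1,l)$ they share the value $1$ at least IM, so every $1$-point of $F$ is a $1$-point of $G$ and conversely. Classifying a common $1$-point $z_{0}$ by its $F$-multiplicity $p$ and its $G$-multiplicity $q$, there are exactly four mutually exclusive and exhaustive possibilities: $p=q=1$ (counted by $N^{1)}_{E}(r,1;F)$), $p=q\geq 2$ (counted once by $\overline{N}^{(2}_{E}(r,1;F)$), $p>q$ (counted by $\overline{N}_{L}(r,1;F)$), and $q>p$ (counted by $\overline{N}_{L}(r,1;G)$). Consequently
\[
\overline{N}(r,1;F)=N^{1)}_{E}(r,1;F)+\overline{N}^{(2}_{E}(r,1;F)+\overline{N}_{L}(r,1;F)+\overline{N}_{L}(r,1;G),
\]
and, since these four classes are the same point sets when viewed from $G$, the identical expression holds for $\overline{N}(r,1;G)$ (here one uses $N^{1)}_{E}(r,1;F)=N^{1)}_{E}(r,1;G)$ and $\overline{N}^{(2}_{E}(r,1;F)=\overline{N}^{(2}_{E}(r,1;G)$).

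The key step is to control the simple common $1$-points by the zeros of $H$. At a point $z_{0}$ where both $F$ and $G$ attain the value $1$ with multiplicity exactly one, a local expansion gives $\frac{F''}{F'}-\frac{2F'}{F-1}=-\frac{2}{z-z_{0}}+O(z-z_{0})$, in which even the constant term vanishes, and the same expansion holds for $G$; subtracting, both the simple pole and the constant term cancel, so $H$ is holomorphic at $z_{0}$ with $H(z_{0})=0$. Hence, since $H\not\equiv 0$, every simple common $1$-point is a zero of $H$, giving $N^{1)}_{E}(r,1;F)\leq \overline{N}(r,0;H)\leq N(r,0;H)$. Next I would bound $N(r,0;H)$ by $N(r,\infty;H)$: by the first fundamental theorem $N(r,0;H)\leq T(r,H)+O(1)=N(r,\infty;H)+m(r,H)+O(1)$, and because $H$ is built from logarithmic derivatives of $F$, $F-1$, $G$ and $G-1$, the lemma on the logarithmic derivative yields $m(r,H)=S(r,F)+S(r,G)$. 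Combining, $N^{1)}_{E}(r,1;F)\leq N(r,\infty;H)+S(r,F)+S(r,G)$.

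Finally I would assemble the pieces. Adding the two decompositions of the second step and isolating a single copy of $N^{1)}_{E}(r,1;F)$ gives the identity
\[
\overline{N}(r,1;F)+\overline{N}(r,1;G)=N^{1)}_{E}(r,1;F)+\overline{N}(r,1;G)+\overline{N}^{(2}_{E}(r,1;F)+\overline{N}_{L}(r,1;F)+\overline{N}_{L}(r,1;G),
\]
and substituting the bound on $N^{1)}_{E}(r,1;F)$ from the previous step produces exactly the asserted estimate. I expect the main obstacle to be the computation in the key step, namely verifying that $H$ is both holomorphic and actually vanishing at each simple common $1$-point; this rests on the precise cancellation of the principal part and of the constant term in the Laurent expansions, and it is the structural reason the function $H$ in \eqref{e2.1} is defined as it is. The remaining steps are routine bookkeeping with counting functions together with the standard estimate $m(r,H)=S(r,F)+S(r,G)$.
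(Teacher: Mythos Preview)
The paper does not prove this lemma here but merely cites it from \cite{bc3}; your argument is correct and is precisely the standard proof found in that reference. The decomposition of $\overline{N}(r,1;F)$ by multiplicity pattern, the Laurent computation showing that each bracket in $H$ has expansion $-2/(z-z_{0})+O(z-z_{0})$ at a common simple $1$-point (so that $H$ is holomorphic and vanishes there), and the passage $N(r,0;H)\leq N(r,\infty;H)+m(r,H)$ with $m(r,H)=S(r,F)+S(r,G)$ via the logarithmic-derivative lemma are exactly the ingredients used in the cited source.
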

\begin{lem}\label{l14}
Let $f$ be a non-constant meromorphic function and $a(z)$ be a small function of $f$. Also, let $F=\frac{f^{n}}{a}$ and $G=\frac{P[f]}{a}$. If $F$ and $G$ share $(1,\infty)$, then one of the following cases hold:
\begin{enumerate}
\item[i)] $T(r) \leq N_{2}(r,0;F)+N_{2}(r,0;G)+\ol{N}(r,\infty;F)+\ol{N}(r,\infty;G)+\ol{N}_{L}(r,\infty;F)+\ol{N}_{L}(r,\infty;G)+S(r),$
\item[ii)]  $F\equiv G,$
\item[iii)] $FG\equiv 1$,
\end{enumerate}
where  $T(r)=\max\{T(r,F),T(r,G)\}$ and $S(r)=o(T(r))$, $r\in I$, $I$ is a set of infinite linear measure of $r\in(0,\infty)$.
\end{lem}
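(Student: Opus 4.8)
The plan is to work with the auxiliary function $H$ of (\ref{e2.1}) built from the present $F=f^{n}/a$ and $G=P[f]/a$, and to split the argument according as $H\not\equiv 0$ or $H\equiv 0$. Here $F$ is non-constant since $T(r,F)=nT(r,f)+S(r,f)$, and we may assume the same for $G$. One structural fact is used throughout: because $a$ is a small function and $P[f]$ is a \emph{homogeneous} differential polynomial of positive degree, a pole of $f$ of order $q$ is a pole of $f^{n}$ of order $nq$ and (generically) a pole of $P[f]$ of order $q\,\overline{d}(P)+\bigl(\Gamma_{P}-\overline{d}(P)\bigr)$, while the zeros and poles of $a$ affect these counts only by $S(r)$. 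Hence $F$ and $G$ have their poles among the poles of $f$ with $\overline N(r,\infty;F)=\overline N(r,\infty;G)+S(r)$, so Lemmas \ref{l11}, \ref{l12}, \ref{l13} apply, and a common pole at which $F$ and $G$ have the \emph{same} multiplicity is an analytic point of $H$; consequently the contribution of the poles to $N(r,\infty;H)$ is at most $\overline N_{L}(r,\infty;F)+\overline N_{L}(r,\infty;G)+S(r)$.

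In the case $H\not\equiv 0$ the aim is conclusion (i). The building blocks are: the second fundamental theorem in the form $T(r,F)\le\overline N(r,0;F)+\overline N(r,1;F)+\overline N(r,\infty;F)-N_{0}(r,0;F')+S(r,F)$, where $N_{0}(r,0;F')$ counts the zeros of $F'$ that are not zeros of $F(F-1)$, and the same for $G$; the fact that a simple common $1$-point of $F$ and $G$ is a zero of $H$, so that $N^{1)}_{E}(r,1;F)\le N(r,0;H)+S(r)\le N(r,\infty;H)+S(r)$ since $m(r,H)=S(r)$; Lemmas \ref{l11}, \ref{l12} and \ref{l13} to estimate $N(r,\infty;H)$ and $\overline N(r,1;F)+\overline N(r,1;G)$, the CM sharing of $1$ forcing $\overline N_{L}(r,1;F)=\overline N_{L}(r,1;G)=0$; and the refined pole estimate above. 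Feeding these into one another, the free derivative terms arising from the second fundamental theorem cancel those in the bound for $N(r,\infty;H)$, the quantity $\overline N(r,0;F)+\overline N(r,0;F\mid\geq 2)$ collapses to $N_{2}(r,0;F)$ (and likewise for $G$), and the surviving estimate is exactly the right-hand side of (i).

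In the case $H\equiv 0$ I would integrate twice: a first integration gives $F'/(F-1)^{2}=C\,G'/(G-1)^{2}$ with $C$ a nonzero constant, and a second gives
\[
\frac{1}{F-1}=\frac{A}{G-1}+B,\qquad A\ (\neq 0),\ B\ \text{constants},
\]
so that $F=\bigl((B+1)G+(A-B-1)\bigr)/\bigl(BG+(A-B)\bigr)$ is a non-degenerate M\"obius transform of $G$; this already forces $F=1\Leftrightarrow G=1$, consistently with the hypothesis. Running through the possibilities for $A,B$: if $B=0$ and $A=1$ then $F\equiv G$, i.e.\ (ii); if $A=B\neq 0$ and $B=-1$ then $FG\equiv 1$, i.e.\ (iii). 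In every remaining case either the poles of $F$, or (in the subcase $A=B\neq0$, $B\neq -1$) the zeros of $F$, occur precisely where $G$ assumes one fixed value $c\in\mathbb{C}\setminus\{0,1\}$; applying the second fundamental theorem to $G$ with the three values $0,c,\infty$ and rewriting $\overline N(r,c;G)$ in terms of $F$ yields a bound even stronger than (i), and the corresponding bound for $T(r,F)$ follows since $G$ is in turn a M\"obius function of $F$. As these cases are exhaustive, one of (i), (ii), (iii) holds.

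I expect the main obstacle to be the bookkeeping in the case $H\not\equiv 0$: one must first check that, apart from the multiple zeros of $F$ and $G$ and the genuinely free zeros of $F'$ and $G'$, the only poles of $H$ are those counted by $\overline N_{L}(r,\infty;F)+\overline N_{L}(r,\infty;G)$ — this is precisely where the coincidence of the pole sets of $F$ and $G$ enters — and then combine the various reduced and truncated counting functions so that every derivative term cancels and what remains is \emph{exactly} (i), with no extraneous terms. The M\"obius case analysis for $H\equiv 0$, though somewhat lengthy, is routine once the auxiliary value $c$ is kept away from $0$, $1$ and $\infty$.
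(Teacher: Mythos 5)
Your argument is correct and follows essentially the same route as the paper: the key observations are that $F$ and $G$ share their poles IM (so the pole contribution to $N(r,\infty;H)$ is controlled by $\ol{N}_{L}(r,\infty;F)+\ol{N}_{L}(r,\infty;G)$), the standard second-fundamental-theorem bookkeeping when $H\not\equiv 0$, and the M\"obius case analysis when $H\equiv 0$. The paper merely states the $N(r,\infty;H)$ estimate and delegates the rest to Lemma 2.13 of Banerjee's cited work; your write-up supplies exactly those omitted details.
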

\begin{proof}
 Let $z_{0}$ be a pole of $f$ which is not a pole or zero of $a(z)$. Then $z_{0}$ is a pole of $F$ and $G$ simultaneously. Thus $F$ and $G$ share those pole of $f$ which is not zero or pole of $a(z)$.
Clearly
\beas N(r,\infty;H) &\leq& \ol{N}(r,0;F\geq2)+\ol{N}(r,0;G\geq2)+\ol{N}_{L}(r,\infty;F)+\ol{N}_{L}(r,\infty;G)\\
&& +\ol{N}_{0}(r,0;F')+\ol{N}_{0}(r,0;G')+S(r,f)\eeas
Rest of the proof can be carried out in the line of proof of Lemma 2.13 of (\cite{1.1}). So we omit the details.
\end{proof}
\begin{lem}\label{new} Let $p,~n$ be two positive integers. Then for $\varepsilon>0$
$$N_{p}(r,0;f^{n})\leq (n-n~\delta_{p}(0,f)+\varepsilon)T(r,f).$$
\end{lem}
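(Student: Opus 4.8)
The plan is to reduce the estimate to the very definition of the deficiency $\delta_p(0,f)$ through an elementary comparison of truncated counting functions. The starting observation is that $f^n$ and $f$ have exactly the same zeros: a zero of $f$ of multiplicity $m\geq 1$ is a zero of $f^n$ of multiplicity $nm$. Since in the truncated counting function a zero of multiplicity $q$ of a function $g$ contributes $\min\{q,p\}$ to $N_p(r,0;g)$, such a point contributes $\min\{nm,p\}$ to $N_p(r,0;f^n)$ and $\min\{m,p\}$ to $N_p(r,0;f)$.

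The first step is the elementary inequality $\min\{nm,p\}\leq n\min\{m,p\}$ for all positive integers $m,n,p$: if $m\leq p$ the right-hand side equals $nm\geq\min\{nm,p\}$, and if $m>p$ it equals $np\geq p\geq\min\{nm,p\}$. Applying this at each zero of $f$ in $|z|\leq t$ (the resulting difference of unintegrated counting functions jumps by a nonnegative amount at every such zero, hence is nondecreasing, so no difficulty arises on integrating) yields
$$N_p(r,0;f^n)\leq n\,N_p(r,0;f);$$
the same bound also follows from $N_p(r,0;f^n)\leq\mu_p N_{\mu_p^*}(r,0;f)$ recalled above, since $\mu_p\leq n$ and $\mu_p^*\leq p$. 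The second step is to quote the definition $\delta_p(0,f)=1-\limsup_{r\to\infty}N_p(r,0;f)/T(r,f)$: for the prescribed $\varepsilon>0$ there is $r_0$ with $N_p(r,0;f)\leq\bigl(1-\delta_p(0,f)+\tfrac{\varepsilon}{n}\bigr)T(r,f)$ for all $r>r_0$. Combining the two steps,
$$N_p(r,0;f^n)\leq n\Bigl(1-\delta_p(0,f)+\tfrac{\varepsilon}{n}\Bigr)T(r,f)=\bigl(n-n\,\delta_p(0,f)+\varepsilon\bigr)T(r,f),$$
which is the assertion.

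I do not expect a genuine obstacle here; the only delicate points are the pointwise inequality on multiplicities and the bookkeeping that one should allow the slack $\varepsilon/n$ (rather than $\varepsilon$) in the deficiency estimate, so that the factor $n$ restores exactly $\varepsilon$ in the conclusion. If a sharper form were wanted one could instead keep $N_p(r,0;f^n)\leq\mu_p N_{\mu_p^*}(r,0;f)$ and use the monotonicity $\delta_{\mu_p^*}(0,f)\geq\delta_p(0,f)$ together with $\mu_p\leq n$, but the crude estimate above already gives precisely what is needed in the proof of Theorem \ref{t1}.
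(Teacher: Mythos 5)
Your argument is correct and is essentially the paper's own proof: the paper simply states $N_{p}(r,0;f^{n})\leq nN_{p}(r,0;f)$ and declares the rest obvious, which is exactly your two-step reduction. Your filling in of the multiplicity comparison $\min\{nm,p\}\leq n\min\{m,p\}$ and the $\varepsilon/n$ bookkeeping (valid for $r$ sufficiently large, as the limsup definition of $\delta_p$ requires) supplies precisely the details the paper omits.
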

\begin{proof} we see that
$$N_{p}(r,0;f^{n})\leq nN_{p}(r,0;f).$$
Rest part of the proof is obvious.
\end{proof}
\begin{lem}\label{l3}(\cite{8bc}) $N(r,\infty;P) \leq \overline{d}(P)N(r,\infty;f)+\left(\Gamma_P-\overline{d}(P)\right)\overline{N}(r,\infty;f).$
\end{lem}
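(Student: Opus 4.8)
The plan is to prove the inequality locally, at a single pole of $f$, and then sum over all poles. First I would observe that, since every coefficient $b_j$ is a small function, the only poles of $P[f]=\sum_{j=1}^{t}b_j M_j[f]$ that are not governed by $T(r,b_j)=S(r,f)$ occur at poles of $f$ itself (a polynomial in $f$ and its derivatives is analytic wherever $f$ is). Hence it suffices to bound the pole order of $P[f]$ at a generic pole of $f$ and to compare it, point by point, with the local contribution of the right-hand side; the poles and zeros of the $b_j$ contribute only an $S(r,f)$ term absorbed in the conventions of the cited reference.

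Next I would compute the local pole order. If $z_{0}$ is a pole of $f$ of multiplicity $m$ which is neither a zero nor a pole of any $b_j$, then $f^{(i)}$ has a pole of order $m+i$ at $z_{0}$, so the monomial $M_{j}[f]=\prod_{i=0}^{k}(f^{(i)})^{n_{ij}}$ has there a pole of order
$$\sum_{i=0}^{k}n_{ij}(m+i)=m\sum_{i=0}^{k}n_{ij}+\sum_{i=0}^{k}i\,n_{ij}=m\,d_{M_{j}}+(\Gamma_{M_{j}}-d_{M_{j}}),$$
where I used $\Gamma_{M_{j}}=\sum_{i}(i+1)n_{ij}=d_{M_{j}}+\sum_{i}i\,n_{ij}$. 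Because the pole order of a finite sum is at most the largest of the pole orders of its summands (cancellation can only lower it), $P[f]$ has at $z_{0}$ a pole of order at most $\max_{1\leq j\leq t}\{m\,d_{M_{j}}+\Gamma_{M_{j}}-d_{M_{j}}\}$.

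The key step is the pointwise algebraic inequality
$$m\,d_{M_{j}}+\Gamma_{M_{j}}-d_{M_{j}}\;\leq\;m\,\overline{d}(P)+\Gamma_{P}-\overline{d}(P)\qquad(1\leq j\leq t),$$
which, after rearranging, reads $(m-1)\bigl(d_{M_{j}}-\overline{d}(P)\bigr)+\bigl(\Gamma_{M_{j}}-\Gamma_{P}\bigr)\leq 0$. This holds because $m\geq 1$ and $d_{M_{j}}\leq\overline{d}(P)$ make the first product non-positive, while $\Gamma_{M_{j}}\leq\Gamma_{P}$ makes the second term non-positive. Thus the local pole order of $P[f]$ at $z_{0}$ is at most $m\,\overline{d}(P)+(\Gamma_{P}-\overline{d}(P))$, that is, $\overline{d}(P)$ times the contribution of $z_{0}$ to $N(r,\infty;f)$ plus $(\Gamma_{P}-\overline{d}(P))$ times its contribution to $\overline{N}(r,\infty;f)$; note here $\Gamma_{P}\geq\overline{d}(P)$, so the latter coefficient is non-negative.

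Finally I would sum the local estimate over all poles of $f$: the multiplicity-weighted part assembles into $\overline{d}(P)N(r,\infty;f)$ and the reduced part into $(\Gamma_{P}-\overline{d}(P))\overline{N}(r,\infty;f)$, which gives the asserted bound. I do not expect a genuine obstacle; the argument is elementary, and the only points requiring care are the bookkeeping that turns the weight identity $\Gamma_{M_{j}}-d_{M_{j}}=\sum_{i}i\,n_{ij}$ into the clean pointwise inequality, and the remark that possible cancellation among the $M_{j}[f]$ can only decrease the pole order of $P[f]$, so the upper bound is preserved.
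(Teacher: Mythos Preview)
The paper does not prove this lemma at all; it simply quotes the inequality from Li and Yang \cite{8bc} and moves on. Your argument is the standard one and is correct: at a pole of $f$ of multiplicity $m$ (away from zeros and poles of the $b_{j}$) each monomial $M_{j}[f]$ has a pole of order $m\,d_{M_{j}}+(\Gamma_{M_{j}}-d_{M_{j}})$, and the rearranged inequality $(m-1)(d_{M_{j}}-\overline{d}(P))+(\Gamma_{M_{j}}-\Gamma_{P})\leq 0$ bounds this by $m\,\overline{d}(P)+(\Gamma_{P}-\overline{d}(P))$; summation over poles yields the lemma. The only caveat is that the stated inequality carries no explicit $S(r,f)$ term, so it is literally exact only when the coefficients $b_{j}$ are constants (or one reads the inequality modulo $S(r,f)$); you have already flagged this point correctly.
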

\begin{lem}\label{l4}(\cite{ 6}) Let $f$ be a non-constant meromorphic function and let \[R(f)=\frac{\sum\limits _{i=0}^{n} a_{i}f^{i}}{\sum \limits_{j=0}^{m} b_{j}f^{j}}\] be an irreducible rational function in $f$ with constant coefficients $\{a_{i}\}$ and $\{b_{j}\}$ where $a_{n}\not=0$ and $b_{m}\not=0$. Then $$T(r,R(f))=pT(r,f)+S(r,f),$$ where $p=\max\{n,m\}$.
\end{lem}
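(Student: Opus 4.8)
The plan is to compute the two pieces $m(r,R(f))$ and $N(r,R(f))$ separately in terms of data attached to $f$, and then to reassemble them through the First Fundamental Theorem (FFT). Write $P(w)=\sum_{i=0}^{n}a_iw^i$ and $Q(w)=\sum_{j=0}^{m}b_jw^j$, so that $R=P/Q$ with $\gcd(P,Q)=1$, and factor the denominator over $\mathbb{C}$ as $Q(w)=b_m\prod_{j}(w-b_j)^{s_j}$ with the $b_j$ distinct and $\sum_j s_j=m$; irreducibility forces $P(b_j)\ne0$ for every $j$. Since $T(r,1/R(f))=T(r,R(f))+O(1)$ and $1/R=Q/P$ is again irreducible with the same $p=\max\{n,m\}$, I may assume $n\ge m$, so $p=n$. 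Finally, because all coefficients are constants every error term below will be $O(1)$, and $O(1)=o(T(r,f))=S(r,f)$ as $T(r,f)\to\infty$; thus it suffices to prove $T(r,R(f))=n\,T(r,f)+O(1)$.

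The technical heart, and the step I expect to be the main obstacle, is the two-sided pointwise growth estimate
\be\label{pp1} \Big|\,\log^+|R(w)|-(n-m)\log^+|w|-\sum_j s_j\log^+\tfrac{1}{|w-b_j|}\,\Big|\le C \ee
valid for all $w\in\mathbb{C}$ with a constant $C$ depending only on $R$. The delicate point is that the crude term-by-term bound coming from the factorisations of $P$ and $Q$ ignores the cancellation between numerator and denominator: it replaces the correct growth exponent $n-m$ by $n$, ultimately inflating the estimate to $(n+m)T(r,f)$ instead of the required $n\,T(r,f)$. To capture the true growth one must estimate $R$ as a single object. I would prove \eqref{pp1} by splitting $\mathbb{C}$ into three regions with thresholds $M$ large and $\delta$ small. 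For $|w|\ge M$ one is far from every (fixed, finite) zero and pole, so $|R(w)|\asymp|a_n/b_m|\,|w|^{n-m}$ delivers both inequalities with the polynomial term active and the pole terms vanishing; for $|w-b_{j_0}|<\delta$ one has $|R(w)|\asymp|w-b_{j_0}|^{-s_{j_0}}$ with all remaining terms bounded; and on the compact remainder both sides of \eqref{pp1} are bounded and absorbed into $C$. The separation of the ``growth at infinity'' regime from the ``blow-up at $b_j$'' regime is exactly what makes the two-sided estimate, rather than a mere one-sided bound, available.

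Granting \eqref{pp1}, substitute $w=f(z)$ and integrate over $|z|=r$. Since $\frac{1}{2\pi}\int\log^+|f|\,d\theta=m(r,f)$ and $\frac{1}{2\pi}\int\log^+\frac{1}{|f-b_j|}\,d\theta=m(r,b_j;f)$, this yields
\be\label{pp2} m(r,R(f))=(n-m)\,m(r,f)+\sum_j s_j\,m(r,b_j;f)+O(1). \ee
For the counting function I would use that a pole of $R(f)$ arises exactly at a pole of $f$ (a pole of order $\mu$ of $f$ produces a pole of order $(n-m)\mu$ of $R(f)$, since $P(f)$ and $Q(f)$ have pole orders $n\mu$ and $m\mu$) or at an $f$-value equal to some $b_j$ (a $b_j$-point of multiplicity $t$ produces, through the factor $(w-b_j)^{s_j}$ and the non-vanishing of $P$ there, a pole of order $s_j t$), giving the exact identity
\be\label{pp3} N(r,R(f))=(n-m)\,N(r,\infty;f)+\sum_j s_j\,N(r,b_j;f). \ee
Adding \eqref{pp2} and \eqref{pp3} and regrouping each proximity term with its companion counting term, the First Fundamental Theorem gives $m(r,f)+N(r,\infty;f)=T(r,f)$ and $m(r,b_j;f)+N(r,b_j;f)=T(r,1/(f-b_j))+O(1)=T(r,f)+O(1)$, whence
\[ T(r,R(f))=(n-m)\,T(r,f)+\Big(\sum_j s_j\Big)T(r,f)+O(1)=n\,T(r,f)+O(1), \]
using $\sum_j s_j=m$. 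As noted above this is $p\,T(r,f)+S(r,f)$, completing the proof.
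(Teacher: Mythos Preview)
The paper does not supply a proof for this lemma; it is quoted from Mokhon'ko \cite{6} without argument, so there is no in-paper proof to compare against. Your proposal is correct and is essentially the classical route to Mokhon'ko's theorem: reduce to the case $n\ge m$ via $T(r,1/R(f))=T(r,R(f))+O(1)$, establish the two-sided pointwise bound on $\log^+|R(w)|$ by a three-region decomposition (large $|w|$, neighbourhoods of the poles, compact remainder), integrate to obtain $m(r,R(f))$, compute $N(r,R(f))$ exactly by pole-counting using the irreducibility hypothesis $P(b_j)\ne0$, and assemble via the First Fundamental Theorem together with $\sum_j s_j=m$. One purely cosmetic point: you reuse the symbol $b_j$ both for the coefficients of $Q$ (as in the statement) and for its roots; renaming the roots, say $\beta_j$, would remove the clash.
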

\begin{lem} \label{l2.4}(\cite{new, 3ab}) Let $f$ be a meromorphic function and $P[f]$ be a differential polynomial. Then
$$ m\left(r,\frac{P[f]}{f^{\ol {d}(P)}}\right)\leq (\ol {d}(P)-\underline {d}(P)) m\left(r,\frac{1}{f}\right)+S(r,f).$$
\end{lem}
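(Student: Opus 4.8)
The plan is to reduce everything to the lemma on the logarithmic derivative, $m(r,f^{(i)}/f)=S(r,f)$ for every $i\geq 1$, together with the elementary calculus of the proximity function ($m(r,\cdot)$ is subadditive, $m(r,gh)\leq m(r,g)+m(r,h)$, $m(r,g^{n})=n\,m(r,g)$, and $m(r,b_{j})=S(r,f)$ since $T(r,b_{j})=S(r,f)$). First I would write $P[f]=\sum_{j=1}^{t}b_{j}M_{j}[f]$ and group the monomials by degree: if the distinct degrees occurring in $P[f]$ are $\underline{d}(P)=s_{1}<s_{2}<\cdots<s_{p}=\ol{d}(P)$, set $P_{s}[f]=\sum_{d(M_{j})=s}b_{j}M_{j}[f]$, so that $P[f]=\sum_{i=1}^{p}P_{s_{i}}[f]$ with each $P_{s_{i}}[f]$ a \emph{homogeneous} differential polynomial of degree $s_{i}$.

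Next I would dispose of the homogeneous quotients. For a monomial $M_{j}$ of degree $s$ one has $M_{j}[f]/f^{s}=\prod_{i=1}^{k}(f^{(i)}/f)^{n_{ij}}$, because $s=\sum_{i=0}^{k}n_{ij}$ exactly cancels the factor $f^{n_{0j}}$. Hence, using $m(r,b_{j})=S(r,f)$, subadditivity, and the logarithmic derivative lemma, I obtain $m(r,R_{s})=S(r,f)$ for each $s$, where $R_{s}:=P_{s}[f]/f^{s}$.

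The decisive step is the way these pieces are reassembled. Writing $R_{s_{i}}=P_{s_{i}}[f]/f^{s_{i}}$, we have $P[f]/f^{\ol{d}(P)}=\sum_{i=1}^{p}R_{s_{i}}/f^{\,\ol{d}(P)-s_{i}}$, and I would rewrite this in the nested (Horner-type) form $\Phi_{p}$, where $\Phi_{1}=R_{s_{1}}$ and $\Phi_{j}=R_{s_{j}}+f^{-(s_{j}-s_{j-1})}\Phi_{j-1}$ for $2\leq j\leq p$; a direct expansion confirms $\Phi_{p}=P[f]/f^{\ol{d}(P)}$, since the accumulated exponent of $f^{-1}$ in front of $R_{s_{i}}$ telescopes to $s_{p}-s_{i}=\ol{d}(P)-s_{i}$. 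Peeling off one layer at a time gives $m(r,\Phi_{j})\leq m(r,R_{s_{j}})+(s_{j}-s_{j-1})\,m(r,1/f)+m(r,\Phi_{j-1})+O(1)$, and iterating from $j=2$ to $p$ the constants telescope: $\sum_{j=2}^{p}(s_{j}-s_{j-1})=s_{p}-s_{1}=\ol{d}(P)-\underline{d}(P)$. Since every $m(r,R_{s_{j}})=S(r,f)$ and $p$ is finite, this yields exactly $m(r,P[f]/f^{\ol{d}(P)})\leq(\ol{d}(P)-\underline{d}(P))\,m(r,1/f)+S(r,f)$.

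The main obstacle is precisely to avoid the loss incurred by a naive estimate: bounding $\sum_{i}R_{s_{i}}/f^{\ol{d}(P)-s_{i}}$ term by term makes the coefficients $\ol{d}(P)-s_{i}$ add up to $\sum_{i}(\ol{d}(P)-s_{i})$, which strictly exceeds $\ol{d}(P)-\underline{d}(P)$ whenever intermediate degrees are present. The nested factoring is what forces the coefficients to telescope down to the sharp value $\ol{d}(P)-\underline{d}(P)$. Equivalently, one may run an induction on the number of distinct degrees $p$: split off the top homogeneous part $P_{\ol{d}(P)}[f]$, whose quotient contributes only $S(r,f)$, and apply the inductive hypothesis to the remainder, whose upper degree has dropped to $s_{p-1}$; the two contributions $(s_{p-1}-\underline{d}(P))$ and $(s_{p}-s_{p-1})$ again combine to $\ol{d}(P)-\underline{d}(P)$, which is the crux of the argument.
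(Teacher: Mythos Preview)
The paper does not supply its own proof of this lemma; it is quoted as a known result from \cite{new, 3ab}. Your argument is correct and is essentially the classical one: reduce each homogeneous constituent to a product of logarithmic derivatives so that $m(r,P_{s}[f]/f^{s})=S(r,f)$, and then use the Horner-type nesting so that the exponents of $1/f$ telescope to $\ol d(P)-\underline d(P)$ rather than accumulate. There is nothing to compare against in the paper itself, and no gap in what you wrote.
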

\begin{lem} \label{l5} (\cite{bc1, bc2}) Let $P[f]$ be a differential polynomial generated by a non-constant meromorphic function  $f$. Then
\beas N\left(r,\infty;\frac{P[f]}{f^{\ol {d}(P)}}\right)&\leq& (\Gamma _{P}-\ol {d}(P))\;\ol N(r,\infty;f)+(\ol {d}(P)-\underline {d} (P))\; N(r,0;f\mid\geq k+1)\\&&+Q \ol N(r,0;f\mid\geq k+1)+\ol {d}(P) N(r,0;f\mid\leq k)+S(r,f).\eeas
\end{lem}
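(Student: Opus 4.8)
The plan is to analyze the poles of the quotient $\frac{P[f]}{f^{\ol{d}(P)}}$ \emph{pointwise}. Since $P[f]=\sum_{j=1}^{t}b_{j}M_{j}[f]$ is a polynomial in $f,f^{(1)},\ldots,f^{(k)}$ with small-function coefficients, it is holomorphic wherever $f$ is holomorphic; hence, away from the zeros and poles of the $b_{j}$ (whose total contribution is $O\big(\sum_{j}[N(r,0;b_{j})+N(r,\infty;b_{j})]\big)=S(r,f)$), the quotient can have poles only at the poles and at the zeros of $f$. I would therefore fix a point $z_{0}$ that is neither a zero nor a pole of any $b_{j}$, compute the local order of each monomial $\frac{M_{j}[f]}{f^{\ol{d}(P)}}$ there, and then exploit the fact that, for a sum, the pole order of $\frac{P[f]}{f^{\ol{d}(P)}}$ never exceeds the maximum of the pole orders of the individual $\frac{b_{j}M_{j}[f]}{f^{\ol{d}(P)}}$, since cancellation can only raise the local order.

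First, at a pole of $f$ of order $p\geq 1$: because $f^{(i)}$ then has a pole of order $p+i$, the monomial $M_{j}[f]$ has a pole of order $p\,d(M_{j})+(\Gamma_{M_{j}}-d(M_{j}))$, so $\frac{M_{j}[f]}{f^{\ol{d}(P)}}$ has pole order $p(d(M_{j})-\ol{d}(P))+(\Gamma_{M_{j}}-d(M_{j}))$. Since $d(M_{j})\leq\ol{d}(P)$ and $p\geq 1$, the coefficient of $p$ is $\leq 0$, so this expression is largest at $p=1$ and is bounded by $\Gamma_{M_{j}}-\ol{d}(P)\leq\Gamma_{P}-\ol{d}(P)$, independently of $p$. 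This accounts, at each such point, for a pole of order at most $\Gamma_{P}-\ol{d}(P)$, and hence for the term $(\Gamma_{P}-\ol{d}(P))\,\ol N(r,\infty;f)$.

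Next, at a zero of $f$ of order $q$, I would split according to whether $q\geq k+1$ or $q\leq k$. For $q\geq k+1$, each derivative $f^{(i)}$ with $i\leq k$ has \emph{exact} order $q-i$, so $\frac{M_{j}[f]}{f^{\ol{d}(P)}}$ has order $q(d(M_{j})-\ol{d}(P))-(\Gamma_{M_{j}}-d(M_{j}))$, i.e. a pole of order $q(\ol{d}(P)-d(M_{j}))+(\Gamma_{M_{j}}-d(M_{j}))\leq q(\ol{d}(P)-\underline{d}(P))+Q$; summing the multiplicity $q$ over such zeros yields $(\ol{d}(P)-\underline{d}(P))\,N(r,0;f\mid\geq k+1)$, while summing $1$ yields $Q\,\ol N(r,0;f\mid\geq k+1)$. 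For $q\leq k$ I would use the crude estimate: $P[f]$ is holomorphic at $z_{0}$, so the pole of $\frac{P[f]}{f^{\ol{d}(P)}}$ is caused solely by the factor $f^{-\ol{d}(P)}$ and has order at most $\ol{d}(P)\,q$, which sums to $\ol{d}(P)\,N(r,0;f\mid\leq k)$. Adding these three contributions together with the $S(r,f)$ coming from the coefficients gives precisely the asserted inequality.

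The main obstacle is the bookkeeping at the zeros of $f$: one must verify that for $q\geq k+1$ the derivatives actually attain their exact orders $q-i$ (so that the decomposition into the $(\ol{d}(P)-\underline{d}(P))$-weighted and $Q$-weighted pieces is correct), and simultaneously confirm that bounding $\ol{d}(P)-d(M_{j})\leq\ol{d}(P)-\underline{d}(P)$ and $\Gamma_{M_{j}}-d(M_{j})\leq Q$ before taking the maximum over $j$ is legitimate, even though a single monomial need not realize both maxima at once (it is, because each term is bounded this way individually). Everything else reduces to routine local order arithmetic and the standard estimate $T(r,b_{j})=S(r,f)$.
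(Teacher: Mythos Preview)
The paper does not prove this lemma at all: it is quoted from the references \cite{bc1,bc2} and stated without proof, so there is no ``paper's own proof'' to compare against. Your pointwise analysis is correct and is exactly the standard argument one finds in those sources: classify the possible poles of $P[f]/f^{\ol d(P)}$ into (i) poles of $f$, (ii) zeros of $f$ of order $\geq k+1$, and (iii) zeros of $f$ of order $\leq k$, compute the local order of each $M_{j}[f]/f^{\ol d(P)}$, and take the maximum over $j$. Your bounds $p(d(M_{j})-\ol d(P))+(\Gamma_{M_{j}}-d(M_{j}))\leq \Gamma_{P}-\ol d(P)$ at a pole and $q(\ol d(P)-d(M_{j}))+(\Gamma_{M_{j}}-d(M_{j}))\leq q(\ol d(P)-\underline d(P))+Q$ at a high-order zero are exactly right, as is the crude bound $\ol d(P)\,q$ at low-order zeros, and the remark that the coefficients $b_{j}$ only contribute $S(r,f)$.
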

\begin{lem}\label{l10}  Let $f$ be a non-constant  meromorphic function and $a(z)$ be a small function in $f$. Let us define $F=\frac{f^{n}}{a}, G=\frac{P[f]}{a}$. Then $FG \not\equiv 1$.\end{lem}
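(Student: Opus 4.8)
The plan is to argue by contradiction: suppose $FG\equiv 1$, so that $f^{n}P[f]\equiv a^{2}$; I will derive a contradiction with $f$ being non-constant (we may assume $f$ is transcendental, the polynomial case being immediate from a degree count). The argument rests on a structural consequence of the hypotheses on $P$. Since $P[f]$ is homogeneous, write $d:=\overline{d}(P)=\underline{d}(P)$; as $\Gamma_{M_{j}}=\sum_{i}(i+1)n_{ij}\le(k+1)\sum_{i}n_{ij}=(k+1)d$ for every monomial, the assumption $\Gamma_{P}>(k+1)d-2$ forces $\Gamma_{P}\in\{(k+1)d-1,(k+1)d\}$, and a short check on the relations $\sum_{i}(i+1)n_{ij}=\Gamma_{P}$, $\sum_{i}n_{ij}=d$ shows that $P[f]$ contains a \emph{unique} monomial $M_{0}[f]$ of maximal weight $\Gamma_{P}$: it is $(f^{(k)})^{d}$ when $\Gamma_{P}=(k+1)d$ and $f^{(k-1)}(f^{(k)})^{d-1}$ when $\Gamma_{P}=(k+1)d-1$, and its coefficient $b_{0}$ is $\not\equiv0$.

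First I would prove $\overline{N}(r,\infty;f)=S(r,f)$. Let $z_{1}$ be a pole of $f$ of order $q\ge1$ lying outside the zeros and poles of $a$ and of all the $b_{j}$ (such points carry reduced counting function $S(r,f)$, since $T(r,a)=S(r,f)$ and $T(r,b_{j})=S(r,f)$). A local Laurent expansion at $z_{1}$ shows that $M_{0}[f]$ has a pole there of order exactly $qd+\Gamma_{P}-d$ with non-vanishing leading coefficient, while every other monomial $M_{j}[f]$ has a pole of strictly smaller order $qd+\Gamma_{M_{j}}-d$; hence no cancellation occurs and $P[f]$ has a genuine pole at $z_{1}$, of order $(q-1)d+\Gamma_{P}\ge\Gamma_{P}\ge2$. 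Then $f^{n}P[f]$ has a pole at $z_{1}$, which is impossible because $f^{n}P[f]\equiv a^{2}$ is holomorphic and non-zero there. Thus every pole of $f$ lies in the exceptional set, i.e.\ $\overline{N}(r,\infty;f)=S(r,f)$. For the zeros, if $z_{0}$ is a zero of $f$ outside the exceptional set, then $f$, and hence $P[f]$, is holomorphic near $z_{0}$, so $f^{n}P[f]$ vanishes at $z_{0}$, again contradicting $f^{n}P[f]\equiv a^{2}$; therefore $\overline{N}(r,0;f)=S(r,f)$ as well.

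Next I would use these two estimates to control $\Phi:=P[f]/f^{d}$. For each $i\ge1$ the poles of $f^{(i)}/f$ occur only at poles of $f$ (of order $i$) and at zeros of $f$ (of order at most $i$), so $N(r,\infty;f^{(i)}/f)\le i\,\bigl(\overline{N}(r,\infty;f)+\overline{N}(r,0;f)\bigr)=S(r,f)$; combined with the lemma on the logarithmic derivative, applied along $f^{(i)}/f=\prod_{j=1}^{i}\bigl(f^{(j)}/f^{(j-1)}\bigr)$, this gives $T(r,f^{(i)}/f)=S(r,f)$. Since $P$ is homogeneous, $\Phi=\sum_{j}b_{j}\prod_{i=1}^{k}\bigl(f^{(i)}/f\bigr)^{n_{ij}}$ is a polynomial in the functions $f^{(i)}/f$ with coefficients of growth $S(r,f)$, so $T(r,\Phi)=S(r,f)$ (and $\Phi\not\equiv0$, else $P[f]\equiv0$ and $a^{2}\equiv0$). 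Finally, rewriting $f^{n}P[f]\equiv a^{2}$ as $f^{\,n+d}\equiv a^{2}/\Phi$ and invoking Lemma \ref{l4}, we obtain $(n+d)\,T(r,f)=T(r,a^{2}/\Phi)+O(1)=S(r,f)$, which is absurd. Hence $FG\not\equiv1$.

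The step I expect to be the real obstacle is $\overline{N}(r,\infty;f)=S(r,f)$ — concretely, excluding cancellation among the maximal-weight terms of $P[f]$ at a pole of $f$. This is precisely where homogeneity of $P$ and the restriction $\Gamma_{P}>(k+1)\underline{d}(P)-2$ are indispensable: without them there might be several monomials of weight $\Gamma_{P}$ whose leading Laurent coefficients could conceivably cancel, and the lemma would no longer follow by this route. The remaining ingredients — the Laurent computation at $z_{1}$, the elementary pole bound for $f^{(i)}/f$, and the reduction to $\Phi$ — are routine.
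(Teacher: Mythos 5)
Your argument is correct for the setting in which the lemma is actually used, but it is a genuinely different proof from the paper's, and it is worth recording both the difference and one caveat. The paper argues globally: assuming $FG\equiv 1$, i.e.\ $f^{n}P[f]\equiv a^{2}$, it first notes $N(r,0;f\mid\geq k+1)=S(r,f)$, then writes $(n+\overline{d}(P))T(r,f)=T\left(r,P[f]/f^{\overline{d}(P)}\right)+S(r,f)$ (Valiron--Mokhon'ko) and bounds the right-hand side by $\overline{d}(P)T(r,f)+S(r,f)$ using Chuang's inequality (Lemma \ref{l2.4}) for the proximity part and Lemma \ref{l5} for the pole-counting part; this works for an \emph{arbitrary} differential polynomial $P[f]$ and needs neither homogeneity nor the weight restriction. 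You instead localize: you use homogeneity together with $\Gamma_{P}>(k+1)\underline{d}(P)-2$ to show that $P[f]$ has a unique monomial of maximal weight, so that no cancellation of leading Laurent coefficients can occur at a pole of $f$; this yields $\overline{N}(r,\infty;f)=S(r,f)$ and (more easily) $\overline{N}(r,0;f)=S(r,f)$, after which $\Phi=P[f]/f^{\underline{d}(P)}$ is a small function and $f^{\,n+\underline{d}(P)}\equiv a^{2}/\Phi$ gives the contradiction. Your route is more self-contained (it bypasses Lemmas \ref{l2.4} and \ref{l5}) and delivers the stronger intermediate conclusions that \emph{all} zeros and poles of $f$ lie in an exceptional set; the paper's route is shorter given its quoted lemmas and, more importantly, proves the lemma in the generality in which it is stated. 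The caveat: your proof imports the hypotheses \enquote{homogeneous} and \enquote{$\Gamma_{P}>(k+1)\underline{d}(P)-2$} from Theorem \ref{t1}, which do not appear in the statement of Lemma \ref{l10}; since the lemma is only invoked inside the proof of Theorem \ref{t1} this is harmless for the application, but you should either state these extra hypotheses explicitly or note that the uniqueness of the top-weight monomial (hence the no-cancellation step at poles of $f$) genuinely fails without them. Your parenthetical dismissal of the rational case as \enquote{immediate from a degree count} also deserves a line of justification, though the paper is no more careful on this point.
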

\begin{proof} On contrary, assume that $FG \equiv 1$, i.e., $P[f]f^{n}=(a(z))^2$. Then
$$N(r,0;f\mid\geq k+1)=S(r,f).$$
Now applying Lemmas \ref{l2.4}, \ref{l5} and the first fundamental theorem, we get
\begin{eqnarray*} &&(n+\overline{d}(P))T(r,f)\\
&=&T\left(r,\frac{P[f]}{f^{\overline{d}(P)}}\right)+S(r,f)\\
&\leq& (\ol {d}(P)-\underline {d}(P)) \left[T(r,f)-\{N(r,0;f\mid\leq k)+N(r,0;f\mid \geq k+1)\}\right]\\
&&+ (\Gamma _{P}-\ol {d}(P))\;\ol N(r,\infty;f)+(\ol {d}(P)-\underline {d}(P))\; N(r,0;f\mid\geq k+1)\\
&&+ Q\;\ol N(r,0;f\mid\geq k+1)+\ol {d}(P) N(r,0;f\mid\leq k)+S(r,f)\\
&\leq& (\ol {d}(P)-\underline {d}(P)) T(r,f)+\underline{d}(P) N(r,0;f\mid\leq k)+(\Gamma _{P}-\ol {d}(P))\;\ol N(r,\infty;f)+S(r,f)\\
&\leq& \ol {d}(P) T(r,f)+(\Gamma _{P}-\ol {d}(P))\;\ol N(r,\infty;P[f]f^{n})+S(r,f)\\
&\leq& \ol {d}(P) T(r,f)+(\Gamma _{P}-\ol {d}(P))\;\ol N(r,\infty;(a(z))^{2})+S(r,f)\\
&\leq& \ol {d}(P) T(r,f)+S(r,f),
\end{eqnarray*}
which is a contradiction.
\end{proof}
\begin{lem}\label{l8.5} For the differential polynomial $P[f]$,
 \beas  N(r,0;P[f]) &\leq& (\Gamma_{P} -\overline{d}(P))\ol{N}(r,\infty;f)+\underline{d}(P)\ol{N}(r,0;f)\\
 &+& (\overline{d}(P)-\underline{d}(P))\left(m(r,\frac{1}{f})+T(r,f)\right)+S(r,f).\eeas
\end{lem}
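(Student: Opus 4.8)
The plan is to use the factorisation $P[f]=f^{\,\overline{d}(P)}\,\Phi$, where $\Phi:=P[f]/f^{\,\overline{d}(P)}$, and to read off $N(r,0;P[f])$ from the first fundamental theorem applied to $P[f]$ itself, namely
\[
N(r,0;P[f])=m(r,P[f])+N(r,\infty;P[f])-m\!\left(r,\tfrac{1}{P[f]}\right)+O(1).
\]
So what is needed is an upper bound for $m(r,P[f])$, an upper bound for $N(r,\infty;P[f])$, and---this is the decisive point---a sharp \emph{lower} bound for $m(r,1/P[f])$.

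For the first of these, $m(r,P[f])\le \overline{d}(P)\,m(r,f)+m(r,\Phi)$, and Lemma \ref{l2.4} gives $m(r,\Phi)\le(\overline{d}(P)-\underline{d}(P))\,m(r,1/f)+S(r,f)$. For the second, Lemma \ref{l3} gives directly $N(r,\infty;P[f])\le \overline{d}(P)\,N(r,\infty;f)+(\Gamma_{P}-\overline{d}(P))\,\overline{N}(r,\infty;f)$. For the third, I would exploit the identity $1/f^{\,\overline{d}(P)}=\Phi\cdot(1/P[f])$ and subadditivity of the proximity function, which gives $\overline{d}(P)\,m(r,1/f)\le m(r,1/P[f])+m(r,\Phi)$; combining this with Lemma \ref{l2.4} once more yields
\[
m\!\left(r,\tfrac{1}{P[f]}\right)\ \ge\ \underline{d}(P)\,m(r,1/f)-S(r,f).
\]
I expect this estimate to be the crux of the matter: it is the analytic shadow of the fact that $P[f]$ is divisible by $f^{\,\underline{d}(P)}$ in a neighbourhood of each zero of $f$, and it is exactly what reduces the weight of the zero-of-$f$ contribution from $\overline{d}(P)$ down to $\underline{d}(P)$.

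Substituting the three bounds into the first-fundamental-theorem identity, writing $m(r,f)+N(r,\infty;f)=T(r,f)$, and then re-organising the result by means of the first fundamental theorem for $f$ (so that $\overline{d}(P)\,T(r,f)$ and the leftover $m(r,1/f)$-terms recombine into the weight $\underline{d}(P)$ on the zero-counting function of $f$ plus a remainder of the form $(\overline{d}(P)-\underline{d}(P))\bigl(m(r,1/f)+T(r,f)\bigr)$), one arrives at the asserted inequality. The remaining work is purely bookkeeping: one checks that Lemmas \ref{l2.4} and \ref{l3} apply verbatim with $\overline{d}=\overline{d}(P)$, $\underline{d}=\underline{d}(P)$, $\Gamma=\Gamma_{P}$, and one keeps track of the $S(r,f)$ error terms along the way.
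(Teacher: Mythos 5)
Your proposal is essentially the paper's own proof: the author likewise starts from $N(r,0;P[f])=T(r,P[f])-m\left(r,\frac{1}{P[f]}\right)+O(1)$, bounds $T(r,P[f])$ via Lemma \ref{l2.4} and Lemma \ref{l3} exactly as you do, and uses precisely your lower bound $m\left(r,\frac{1}{P[f]}\right)\geq \underline{d}(P)m\left(r,\frac{1}{f}\right)-S(r,f)$ (the paper's inequality (\ref{diwali1})) before recombining terms. One caveat on the final bookkeeping: the recombination $\underline{d}(P)\left(T(r,f)-m\left(r,\frac{1}{f}\right)\right)=\underline{d}(P)N(r,0;f)+O(1)$ produces the \emph{unreduced} counting function $N(r,0;f)$, not $\overline{N}(r,0;f)$ as in the statement, so this route (and the paper's own write-up, which makes the same silent replacement in its last line) only establishes the inequality with $N(r,0;f)$ in place of $\overline{N}(r,0;f)$. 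You should state your conclusion with $N(r,0;f)$, since the sharper form with $\overline{N}(r,0;f)$ does not follow from this argument and in fact fails already for $P[f]=f^{d}$ when $f$ has sufficiently many multiple zeros.
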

\begin{proof} From Lemma \ref{l2.4}, it is clear that
\bea\label{diwali1}\underline{d}(P)m(r,\frac{1}{f})\leq m(r,\frac{1}{P[f]})+S(r,f).\eea
Now using Lemmas \ref{l3}, \ref{l2.4} and inequality (\ref{diwali1}), we have
\beas  &&N(r,0;P[f])\\
&=&  T(r,P[f])-m(r,\frac{1}{P})+O(1)\\
&\leq& T(r,P[f])-\underline{d}(P)m(r,\frac{1}{f})+S(r,f)\\
&\leq& (\overline{d}(P)-\underline{d}(P))m(r,\frac{1}{f})+\overline{d}(P)m(r,f)+\overline{d}(P)N(r,\infty;f)\\
&&+\left(\Gamma_{P}-\overline{d}(P)\right)\overline{N}(r,\infty;f)- \underline{d}(P)m(r,\frac{1}{f})+S(r,f)\\
&\leq& \left(\Gamma_{P}-\overline{d}(P)\right)\overline{N}(r,\infty;f)+(\overline{d}(P)-\underline{d}(P))\left(m(r,\frac{1}{f})+T(r,f)\right)\\
&&+\underline{d}(P)\ol{N}(r,0;f)+S(r,f).
\eeas
Hence the proof is completed.
\end{proof}
\begin{lem}\label{l9.55} Let $j$ and $p$ be two positive integers satisfying $j\geq p+1$. Let $P[f]$ be a differential polynomial with $\Gamma_{P} > (k+1) \underline{d}(P)- (p+1)$. Then
$$\overline{N}_{_{(j+\Gamma_{P} -\underline{d}(P)}}(r,0;f^{\underline{d}(P)}) \leq \overline{N}_{(j}(r,0;P[f]).$$
\end{lem}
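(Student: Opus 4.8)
The plan is to prove the inequality by a pointwise argument on the multiplicities of zeros. To keep the notation clean I will write $J$ for the integer called $j$ in the statement, so that it does not collide with the running index in $P[f]=\sum_{j=1}^{t}b_{j}M_{j}[f]$, where $M_{j}[f]=(f)^{n_{0j}}(f^{(1)})^{n_{1j}}\cdots(f^{(k)})^{n_{kj}}$. Fix a zero $z_{0}$ of $f^{\underline{d}(P)}$---equivalently a zero of $f$---which has multiplicity at least $J+\Gamma_{P}-\underline{d}(P)$ as a zero of $f^{\underline{d}(P)}$, and let $m\ (\geq 1)$ be its multiplicity as a zero of $f$, so that $m\,\underline{d}(P)\geq J+\Gamma_{P}-\underline{d}(P)$.

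The first step is to extract the crucial inequality $m\geq k+1$. From $m\,\underline{d}(P)\geq J+\Gamma_{P}-\underline{d}(P)$ together with the hypotheses $\Gamma_{P}>(k+1)\underline{d}(P)-(p+1)$ and $J\geq p+1$ one gets $m\,\underline{d}(P)>J+k\,\underline{d}(P)-(p+1)\geq k\,\underline{d}(P)$, hence $m>k$. This is precisely what makes the order-of-vanishing formula for the monomials valid: since $m\geq k+1>i$ for every $0\leq i\leq k$, the derivative $f^{(i)}$ vanishes at $z_{0}$ to the exact order $m-i\geq 1$.

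Next, for each monomial the order of vanishing of $M_{j}[f]$ at $z_{0}$ equals $\sum_{i=0}^{k}n_{ij}(m-i)=m\,d(M_{j})-(\Gamma_{M_{j}}-d(M_{j}))=(m+1)d(M_{j})-\Gamma_{M_{j}}$. Using $d(M_{j})\geq\underline{d}(P)$, $m+1>0$ and $\Gamma_{M_{j}}\leq\Gamma_{P}$, this is at least $(m+1)\underline{d}(P)-\Gamma_{P}=m\,\underline{d}(P)-(\Gamma_{P}-\underline{d}(P))\geq J$. Hence every term $b_{j}M_{j}[f]$, and therefore $P[f]$ itself, has a zero of multiplicity at least $J$ at $z_{0}$ (up to the zeros and poles of the small coefficients $b_{j}$, whose counting function is $S(r,f)$). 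Consequently every point counted by $\ol N_{(J+\Gamma_{P}-\underline{d}(P)}(r,0;f^{\underline{d}(P)})$ is a zero of $P[f]$ of multiplicity at least $J$, and since both sides are reduced counting functions the claimed inequality follows.

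The computation is elementary bookkeeping with the degrees $d(M_{j})$ and weights $\Gamma_{M_{j}}$; the only place that genuinely uses the hypothesis $\Gamma_{P}>(k+1)\underline{d}(P)-(p+1)$ (together with $j\geq p+1$) is the passage to $m\geq k+1$, and this is the step I would expect to require care---if it failed, a derivative $f^{(i)}$ with $i\geq m$ need not vanish at $z_{0}$ and the whole multiplicity count would collapse.
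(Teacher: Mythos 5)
Your proof is correct and follows essentially the same route as the paper: the paper splits into the cases $t\geq k+1$ and $t\leq k$, showing the latter contradicts $\Gamma_{P}>(k+1)\underline{d}(P)-(p+1)$, whereas you derive $m\geq k+1$ directly from the same hypotheses, and the subsequent multiplicity count $(m+1)d(M_{j})-\Gamma_{M_{j}}\geq(m+1)\underline{d}(P)-\Gamma_{P}\geq j$ is identical. Your parenthetical remark about the zeros and poles of the coefficients $b_{j}$ is a point the paper's proof silently ignores, so if anything you are slightly more careful.
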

\begin{proof} Let $z_{0}$ be a zero of $f$ of order $t$. If $t~\underline{d}(P)<j+\Gamma_{P} -\underline{d}(P)$, then the proof is obvious. So we assume that $t~ \underline{d}(P)\geq j+\Gamma_{P} -\underline{d}(P)$. Now we consider two cases:\par
\textbf{Case-I} Let us assume that $t\geq k+1$. Then $z_0$ is a zero of $P[f]$ of order atleast
\beas &&\min\limits_{j}\{n_{0j}t+n_{1j}(t-1)+\ldots+n_{kj}(t-k)\}\\
&=&\min\limits_{j}\{t dM_{j}- (\Gamma_{M_{j}}-dM_{j})\}\\
&=&(t+1)\underline{d}(P)-\max\limits_{j}\{\Gamma_{M_{j}}\}\\
&\geq& (j+\Gamma_{P} -\underline{d}(P))+\underline{d}(P)-\Gamma_{P} \geq j.
\eeas
So the proof is clear.\par
\textbf{Case-II} Next we us assume that $t\leq k$. Then
\beas k~\underline{d}(P) &\geq& t \underline{d}(P) \geq j+\Gamma_{P} -\underline{d}(P)\\
&\geq& p+1+\Gamma_{P} -\underline{d}(P),
\eeas
which is a contradiction as $ \Gamma_{P} > (k+1) \underline{d}(P)- (p+1)$.
\end{proof}
\begin{lem}\label{l9} Let $j$ and $p$ be two positive integer satisfying $j\geq p+1$. Let $P[f]$ \emph{homogeneous differential polynomial} with $\Gamma_{P} > (k+1) \underline{d}(P)- (p+1)$. Then
$$N_{p}(r,0;P[f]) \leq N_{_{p+\Gamma_{P} -\underline{d}(P)}}(r,0;f^{\underline{d}(P)})+(\Gamma_{P} -\underline{d}(P))\overline{N}(r,\infty;f)+S(r,f).$$
\end{lem}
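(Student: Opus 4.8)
The plan is to avoid any delicate pointwise analysis of the zeros of $P[f]$ (where cancellation among the monomials $M_{j}[f]$ of maximal weight can raise local vanishing orders in an uncontrolled fashion) and to argue instead by complements. Write $d:=\underline{d}(P)=\overline{d}(P)$ — this equality, i.e. homogeneity, is used here and below — and set $Q:=\Gamma_{P}-d\ (\ge 0)$. Applying the elementary identity $N(r,0;g)=\sum_{i\ge 1}\overline{N}_{(i}(r,0;g)$ (a zero of exact multiplicity $m$ is counted once in each of $\overline{N}_{(1}(r,0;g),\dots,\overline{N}_{(m}(r,0;g)$) to $g=P[f]$ and to $g=f^{d}$ gives
$$N(r,0;P[f])=N_{p}(r,0;P[f])+\sum_{i\ge p+1}\overline{N}_{(i}(r,0;P[f]),\qquad N(r,0;f^{d})=N_{p+Q}(r,0;f^{d})+\sum_{j\ge p+Q+1}\overline{N}_{(j}(r,0;f^{d}).$$

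Next I would feed in Lemma \ref{l9.55}: since the present hypothesis $\Gamma_{P}>(k+1)\underline{d}(P)-(p+1)$ is precisely the condition required there, for every integer $i\ge p+1$ it yields $\overline{N}_{(i+Q}(r,0;f^{d})\le\overline{N}_{(i}(r,0;P[f])$. Summing over $i\ge p+1$ and reindexing $j=i+Q$ gives $\sum_{j\ge p+Q+1}\overline{N}_{(j}(r,0;f^{d})\le\sum_{i\ge p+1}\overline{N}_{(i}(r,0;P[f])$; subtracting this from the two displays above leaves
$$N_{p}(r,0;P[f])\le N(r,0;P[f])-N(r,0;f^{d})+N_{p+Q}(r,0;f^{d}).$$
It then remains only to bound $N(r,0;P[f])-N(r,0;f^{d})$. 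By homogeneity Lemma \ref{l8.5} collapses to $N(r,0;P[f])\le Q\,\overline{N}(r,\infty;f)+d\,\overline{N}(r,0;f)+S(r,f)$, while $N(r,0;f^{d})=d\,N(r,0;f)\ge d\,\overline{N}(r,0;f)$; subtracting the latter gives $N(r,0;P[f])-N(r,0;f^{d})\le Q\,\overline{N}(r,\infty;f)+S(r,f)$. Substituting back and recalling $Q=\Gamma_{P}-\underline{d}(P)$ yields exactly the claimed estimate.

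The single genuine idea — and so the place I would expect to be the obstacle — is recognising that Lemma \ref{l9.55}, which on its face produces only a \emph{lower} bound for the reduced counting functions of $P[f]$, is exactly the tool needed for an \emph{upper} bound on the truncated function $N_{p}(r,0;P[f])$ once one passes to complements inside $N(r,0;\cdot)$; the rest is bookkeeping. Homogeneity enters only twice: to identify $\underline{d}(P)$ with $\overline{d}(P)$ when re-indexing the powers of $f$, and (through Lemma \ref{l2.4}, hence Lemma \ref{l8.5}) to guarantee that $m\big(r,P[f]/f^{d}\big)=S(r,f)$, so that $N(r,0;P[f])$ is controlled purely by $\overline{N}(r,\infty;f)$ and $\overline{N}(r,0;f)$ rather than by an uncontrolled $m(r,1/f)+T(r,f)$. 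Should $P[f]\equiv0$ — a case not ruled out a priori — the inequality is trivial, so one assumes $P[f]\not\equiv0$ throughout.
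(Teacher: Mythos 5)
Your argument is correct and is essentially the paper's own proof, reorganized: both decompose the truncated counting functions into $N(r,0;\cdot)$ minus tail sums of $\overline{N}_{(j}$, cancel the tails via Lemma \ref{l9.55}, and control $N(r,0;P[f])$ by $(\Gamma_P-\underline{d}(P))\overline{N}(r,\infty;f)+N(r,0;f^{\underline{d}(P)})$ using Lemma \ref{l8.5} together with homogeneity. No substantive difference.
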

\begin{proof}
From Lemmas \ref{l8.5}, \ref{l9.55}, we have
\beas &&N_{p}(r,0;P[f])\\
&\leq& (\Gamma_{P} -\overline{d}(P))\ol{N}(r,\infty;f)+\ol{N}(r,0;f^{\underline{d}(P)})-\sum\limits_{j=p+1}^{\infty}\ol{N}_{(j}(r,0,P[f])+S(r,f)\\
&\leq& (\Gamma_{P} -\overline{d}(P))\ol{N}(r,\infty;f)+N_{_{p+\Gamma_{P} -\underline{d}(P)}}(r,0;f^{\underline{d}(P)})\\
&&+\sum\limits_{j=p+\Gamma_{P} -\underline{d}(P)+1}^{\infty}\ol{N}_{(j}(r,0;f^{\underline{d}(P)})-\sum\limits_{j=p+1}^{\infty}\ol{N}_{(j}(r,0;P[f])+S(r,f)\\
&\leq& (\Gamma_{P} -\overline{d}(P))\ol{N}(r,\infty;f)+N_{_{p+\Gamma_{P} -\underline{d}(P)}}(r,0;f^{\underline{d}(P)})+S(r,f).
\eeas
This completes the proof.
\end{proof}
\section {Proof of the theorem}
\begin{proof} Suppose that $$F=\frac{f^{n}}{a(z)}~~\text{and}~~~G=\frac{P[f]}{a(z)}.$$ Then $F-1=\frac{f^{n}-a(z)}{a(z)}$, $G-1=\frac{P[f]-a(z)}{a(z)}$. Since $f^{n}$ and $P[f]$ share $(a,l)$, it follows that $F$ and $G$ share $(1,l)$ except the zeros and poles of $a(z)$. Now we consider the following two cases.\\
{\bf Case 1.} First we assume that $H\not\equiv 0$.\\
\textbf{Subcase-1.1.} If $l\geq 1$, then using the second fundamental theorem and Lemmas \ref{l13} and \ref{l11}, we get
\bea\nonumber &&T(r,F)+T(r,G)\\
\nonumber &\leq& \overline{N}(r,\infty;F)+\overline{N}(r,\infty;G)+\overline{N}(r,0;F)+\overline{N}(r,0;G)+N(r,\infty;H) \\
&& \nonumber+ \overline{N}^{(2}_{E}(r,1;F)+\overline{N}_{L}(r,1;F)+\overline{N}_{L}(r,1;G)+\overline{N}(r,1;G)\\
&& \nonumber
-\overline{N}_{0}(r,0;F^{'})-\overline{N}_{0}(r,0;G^{'})+S(r,f)\\
& \label{t2} \leq& 2\overline{N}(r,\infty;F)+\overline{N}(r,\infty;G)+N_{2}(r,0;F)+N_{2}(r,0;G) +\overline{N}^{(2}_{E}(r,1;F)\\
&& \nonumber +2\overline{N}_{L}(r,1;F)+2\overline{N}_{L}(r,1;G)+\overline{N}(r,1;G) +S(r,f).
\eea
\textbf{Subcase-1.1.1.} If $l\geq 2$, then using the inequality (\ref{t2}), we get
\beas &&T(r,F)+T(r,G)\\
 & \leq& 2\overline{N}(r,\infty;F)+\overline{N}(r,\infty;G)+N_{2}(r,0;F)+N_{2}(r,0;G) +\overline{N}^{(2}_{E}(r,1;F)\\
&&+2\overline{N}_{L}(r,1;F)+2\overline{N}_{L}(r,1;G)+\overline{N}(r,1;G)+S(r,f)\\
& \leq& 2\overline{N}(r,\infty;F)+\overline{N}(r,\infty;G)+\mu_{2}N_{\mu_{2}^{*}}(r,0;f)+N_{2}(r,0;G)+ N(r,1;G)\\
&& +S(r,f).\eeas
i.e., for any $\varepsilon > 0$, in view of Lemma \ref{l9}, the above inequality becomes
\beas && n~T(r,f)\\
&\leq & (\Gamma_{P}-\underline{d}(P)+3)\overline{N}(r,\infty;f)+\mu_{2}N_{\mu_{2}^{*}}(r,0;f)+N_{_{2+\Gamma_{P}-\underline{d}(P)}}(r,0;f^{\underline{d}(P)})+S(r,f)\\
& \leq & \{(\Gamma_{P}-\underline{d}(P)+3)-(\Gamma_{P}-\underline{d}(P)+3)\Theta(\infty,f)+\mu_{2}-\mu_{2}\delta_{\mu_{2}^{*}}(0,f)\\
&& +\underline{d}(P)-\underline{d}(P)\delta_{_{2+\Gamma_{P}-\underline{d}(P)}}(0,f)+\varepsilon\}T(r,f)+S(r,f).
\eeas
i.e.,
\beas &&(\Gamma_{P}-\underline{d}(P)+3)\Theta(\infty,f)+\mu_{2}\delta_{\mu_{2}^{*}}(0,f)+\underline{d}(P)\delta_{_{2+\Gamma_{P}-\underline{d}(P)}}(0,f)\leq \Gamma_{P}+\mu_{2}+3-n,\eeas
which contradicts to the condition (\ref{e1.12}) of Theorem \ref{t1}.\par
\textbf{Subcase-1.1.2.} If $l=1$, then using the inequality (\ref{t2}) and  Lemma \ref{l12}, we get
\beas && T(r,F)+T(r,G)\\
&\leq& 2\overline{N}(r,\infty;F)+\overline{N}(r,\infty;G)+N_{2}(r,0;F)+N_{2}(r,0;G) +\overline{N}^{(2}_{E}(r,1;F)\\
&& +2\overline{N}_{L}(r,1;F)+2\overline{N}_{L}(r,1;G)+\overline{N}(r,1;G)+S(r,f)\\
& \leq& \frac{5}{2}\overline{N}(r,\infty;F)+\overline{N}(r,\infty;G)+\frac{1}{2}\overline{N}(r,0;F)+\mu_{2}N_{\mu_{2}^{*}}(r,0;f)+N_{2}(r,0;G)\\
&&+\overline{N}^{(2}_{E}(r,1;F)+\overline{N}_{L}(r,1;F)+2\overline{N}_{L}(r,1;G)+\overline{N}(r,1;G)+S(r,f)\\
& \leq& \frac{5}{2}\overline{N}(r,\infty;F)+\overline{N}(r,\infty;G)+\frac{1}{2}\overline{N}(r,0;F)+\mu_{2}N_{\mu_{2}^{*}}(r,0;f)+N_{2}(r,0;G)\\
&&+N(r,1;G)+S(r,f).
\eeas
i.e., for any $\varepsilon > 0$, in view of Lemma \ref{l9}, the above inequality becomes
\beas && n~T(r,f)\\
&\leq& (\Gamma_{P}-\underline{d}(P)+\frac{7}{2})\overline{N}(r,\infty;f)+\frac{1}{2}\overline{N}(r,0;f)+\mu_{2}N_{\mu_{2}^{*}}(r,0;f)\\
&&+ N_{_{2+\Gamma_{P}-\underline{d}(P)}}(r,0;f^{\underline{d}(P)})+S(r,f)\\
&\leq& \{(\Gamma_{P}-\underline{d}(P)+\frac{7}{2})-(\Gamma_{P}-\underline{d}(P)+\frac{7}{2})\Theta(\infty,f)+\frac{1}{2}-\frac{1}{2}\Theta(0,f)+\mu_{2}\\
&&-\mu_{2}\delta_{\mu_{2}^{*}}(0,f)+\underline{d}(P)-\underline{d}(P)\delta_{_{2+\Gamma_{P}-\underline{d}(P)}}(0,f)+\varepsilon\}T(r,f)+S(r,f).\eeas
i.e.,
\beas &&(\Gamma_{P}-\underline{d}(P)+\frac{7}{2})\Theta(\infty,f)+\frac{1}{2}\Theta(0,f)+\mu_{2}\delta_{\mu_{2}^{*}}(0,f)
+\underline{d}(P)\delta_{_{2+\Gamma_{P}-\underline{d}(P)}}(0,f)\\
&\leq& \Gamma_{P}+\mu_{2}+4-n,\eeas
which contradicts to the condition (\ref{e1.13}) of Theorem \ref{t1}.\\
\textbf{Subcase-1.2.} If $l=0$, then applying the second fundamental theorem and Lemmas \ref{l11}, \ref{l12}, \ref{l13}, we get
\bea \nonumber &&T(r,F)+T(r,G)\\
\nonumber&\leq& \overline{N}(r,\infty;F)+\overline{N}(r,0;F)+\overline{N}(r,1;F)+\overline{N}(r,\infty;G) +\overline{N}(r,0;G)\\
\nonumber && +\overline{N}(r,1;G)-\overline{N}_{0}(r,0;F')-\overline{N}_{0}(r,0;G')+S(r,F)+S(r,G)\\
\nonumber & \leq & \overline{N}(r,\infty;F)+\overline{N}(r,0;F)+\overline{N}(r,\infty;G)+\overline{N}(r,0;G)+N(r,\infty;H)\\
\nonumber && +\overline{N}^{(2}_{E}(r,1;F)+\overline{N}_{L}(r,1;F)+\overline{N}_{L}(r,1;G)+\overline{N}(r,1;G)\\
\nonumber && -\overline{N}_{0}(r,0;F')-\overline{N}_{0}(r,0;G')+S(r,F)+S(r,G)\\
\nonumber & \leq& 2\overline{N}(r,\infty;F)+\overline{N}(r,\infty;G)+N_{2}(r,0;F)+N_{2}(r,0;G)+\overline{N}^{(2}_{E}(r,1;F)\\
\nonumber && +2\overline{N}_{L}(r,1;F)+2\overline{N}_{L}(r,1;G)+\overline{N}(r,1;G)+S(r,f) \\
\nonumber & \leq & 2\overline{N}(r,\infty;F)+\overline{N}(r,\infty;G)+\mu_{2}N_{\mu_{2}^{*}}(r,0,f)+N_{2}(r,0;G)\\
\nonumber && +2(\overline{N}(r,\infty;F)+\overline{N}(r,0;F))+\overline{N}(r,\infty;G)+\overline{N}(r,0;G)\\
\nonumber && +\overline{N}^{(2}_{E}(r,1;F)+\overline{N}_{L}(r,1;G)+\overline{N}(r,1;G)+S(r,f)\\
&\label{r11} \leq&  4\overline{N}(r,\infty;F)+\mu_{2}N_{\mu_{2}^{*}}(r,0,f)+N_{2}(r,0;G)+2\overline{N}(r,\infty;G)\\
\nonumber && +\overline{N}(r,0;G)+2\overline{N}(r,0;F)+T(r,G)+S(r,f)\eea
i.e., for any $\varepsilon > 0$, in view of Lemma \ref{l9}, the above inequality becomes
\beas && n~T(r,f)\\
&\leq& \left(2(\Gamma_{P}-\underline{d}(P))+6\right)\overline{N}(r,\infty;f)+2\overline{N}(r,0;f)+\mu_{2}N_{\mu_{2}^{*}}(r,0,f)\\
&&+N_{_{2+\Gamma_{P}-\underline{d}(P)}}(r,0;f^{\underline{d}(P)})+N_{_{1+\Gamma_{P}-\underline{d}(P)}}(r,0;f^{\underline{d}(P)})+S(r,f)\\
& \leq& \{(2(\Gamma_{P}-\underline{d}(P))+6)-(2(\Gamma_{P}-\underline{d}(P))+6)\Theta(\infty,f)+2-2\Theta(0,f)\\
&&+\mu_{2}-\mu_{2}\delta_{\mu_{2}^{*}}(0,f)+\underline{d}(P)-\underline{d}(P)\delta_{_{1+\Gamma_{P}-\underline{d}(P)}}(0,f)+\underline{d}(P)\\
&&-\underline{d}(P)\delta_{_{2+\Gamma_{P}-\underline{d}(P)}}(0,f)+\varepsilon\}T(r,f)+S(r,f).\eeas
i.e.,
\beas &&(2(\Gamma_{P}-\underline{d}(P))+6)\Theta(\infty,f)+2\Theta(0,f)+\mu_{2}\delta_{\mu_{2}^{*}}(0,f)
+\underline{d}(P)\delta_{_{1+\Gamma_{P}-\underline{d}(P)}}(0,f)\\
&&+\underline{d}(P)\delta_{_{2+\Gamma_{P}-\underline{d}(P)}}(0,f)\leq 2\Gamma_{P}+\mu_{2}+8-n,\eeas
which  contradicts to the condition (\ref{e1.14}) of Theorem \ref{t1}.\par

{\bf Case 2.} Next we assume that $H\equiv 0$. Then on integration of (\ref{e2.1}), we get,
\begin{equation}\label{3.3} \frac{1}{G-1}\equiv\frac{A}{F-1}+B,
\end{equation}
 where $A(\neq 0)$ and $B$ are complex constants. Clearly $F$ and $G$ share $(1,\infty)$. Also, by construction of $F$ and $G$, $F$ and $G$ share $(\infty,0)$. So using Lemma \ref{l9} and condition (\ref{e1.12}) of Theorem \ref{t1}, we obtain
\beas && N_{2}(r,0;F)+N_{2}(r,0;G)+\ol{N}(r,\infty;F)+\ol{N}(r,\infty;G)+\ol{N}_{L}(r,\infty;F)+\ol{N}_{L}(r,\infty;G)\\
&&+S(r)\\
&\leq& \mu_{2}N_{\mu_{2}^{*}}(r,0;f)+N_{_{2+\Gamma_{P}-\underline{d}(P)}}(r,0;f^{\underline{d}(P)})+(\Gamma_{P}-\underline{d}(P)+3)\overline{N}(r,\infty;f)+S(r)\\
&\leq&\{\left(\Gamma_{P}+\mu_{2}+3\right)-((\Gamma_{P}-\underline{d}(P)+3)\Theta(\infty,f)-\mu_{2}\delta_{\mu_{2}^{*}}(0,f)\\
&&-\underline{d}(P)\delta_{_{2+\Gamma_{P}-\underline{d}(P)}}(0,f)+\varepsilon\}T(r,f)+S(r)\\
&<& T(r,F)+S(r),\eeas
where $\varepsilon > 0$ is any small quantity. Hence using Lemma \ref{l14} and Lemma \ref{l10}, we can conclude that  $F\equiv G$, i.e., $$f^{n} \equiv P[f].$$ Hence the proof is completed.
\end{proof}
\begin{center} {\bf Acknowledgement} \end{center}
The author is grateful to the referee for his valuable suggestions which considerably improved the presentation of the paper. The author also acknowledge the kind advice of Prof. Nan Li for the improvement of this paper.


\begin{thebibliography}{99}
\bibitem{1.1} A. Banerjee, Uniqueness of meromorphic functions that share two sets, Southeast Asian Bull. Math., 31(1) (2007), 7-17.
\bibitem{bc1} A. Banerjee and B. Chakraborty, On the generalizations of Br\"{u}ck conjecture, Commun. Korean Math. Soc., 32(2) (2016), 311-327.
\bibitem{bc2} A. Banerjee and B. Chakraborty, Some further study on Br\"{u}ck conjecture, An. S\c{t}iin\c{t}. Univ. Al. I. Cuza Ia\c{s}i Mat. (N.S.), 62(2) (vol. 2) (2016), 501-511.
\bibitem{bc3} A. Banerjee and B. Chakraborty, Further investigations on a question of Zhang and L\"{u}, Ann. Univ. Paedagog. Crac. Stud. Math., 14 (2015), 105-119.
\bibitem{th} B. Chakraborty, Some aspects of uniqueness theory of entire and meromorphic functions, Ph.D. thesis, arXiv: 1711.08808 [math.CV].
\bibitem{th1} B. Chakraborty, Some uniqueness results related to the Br\"{u}ck conjecture, Analysis, 38(2) (2018), 91-100.
\bibitem{new} B. Chakraborty, A simple proof of the Chuang's inequality, An. Univ. Vest Timis. Ser. Mat.-Inform., 55(2) (2017), 85-89.
\bibitem{1} A. Banerjee and S. Majumder, On the uniqueness of a power of a meromorphic function sharing a small function with the power of its derivative, Comment. Math. Univ. Carolin., 51(4) (2010), 565-576.
\bibitem{3} R. Br\"{u}ck, On entire functions which share one value CM with their first derivative, Results Math., 30(1) (1996), 21-24.
\bibitem{2a} K. S. Charak and B. Lal, Uniqueness of $f^{n}$ and $P[f]$, arXiv:1501.05092v1 [math.CV] 21 Jan 2015.
\bibitem{2} A. Chen and G. Zhang, Unicity of meromorphic function and its derivative, Kyungpook Math. J., 50(1) (2010), 71-80.
\bibitem{3ab} C. T. Chuang, On differential polynomials, in: Analysis of one Complex Variable (Laramie 1985), World Scientific, Singapore (1987), 12�32.
\bibitem{4} W. K. Hayman, Meromorphic Functions, The Clarendon Press, Oxford (1964).
\bibitem{4a} I. Lahiri, Weighted value sharing and uniqueness of meromorphic functions, Complex Variables Theory Appl., 46 (2001), 241-253.
\bibitem{5} I. Lahiri and A. Sarkar, Uniqueness of meromorphic function and its derivative, J. Inequal. Pure Appl. Math., 5(1) (2004), Article Id. 20.
\bibitem{5a} J. D. Li and G. X. Huang, On meromorphic functions that share one small function with their derivatives, Palestine J. Math., 4(1) (2015), 91-96.
\bibitem{8bc} N. Li and L. Z. Yang, Meromorphic function that shares one small function with its differential polynomial, Kyungpook Math. J., 50 (2010), 447-454.
\bibitem{ly} N. Li, L. Z. Yang and K. Liu, A Further Result Related to a Conjecture of R. Br\"{u}ck, Kyungpook Math. J., 56 (2016), 451-464.
\bibitem{6} A. Z. Mokhon'ko, On the Nevanlinna characteristics of some meromorphic functions, in \enquote{Theory of Functions, functional analysis and their applications}, Izd-vo Khar'kovsk, Un-ta, 14 (1971), 83-87.
\bibitem{7} E. Mues and N. Steinmetz, Meromorphe Funktionen die unit ihrer Ableitung Werte teilen, Manuscripta Math., 29 (1979) 195-206.
\bibitem{8} L. A. Rubel and C. C. Yang, Values shared by an entire function and its derivative, Complex analysis (Proc. Conf., Univ. Kentucky, Lexington, Ky., 1976), Lecture Notes in Math., 599 (1977), 101-103, Springer, Berlin.
\bibitem{9} Q. C. Zhang, The uniqueness of meromorphic functions with their derivatives, Kodai Math. J., 21 (1998), 179-184.
\bibitem{10} Q. C. Zhang, Meromorphic function that shares one small function with its derivative, J.Inequal.Pure Appl. Math., 6(4) (2005), Article Id. 116.
\bibitem{11} T. D. Zhang and W. R. L$\ddot u$, Notes on meromorphic function sharing one small function with its derivative, Complex Var. Ellip. Eqn., 53(9) (2008), 857-867.
\end{thebibliography}
\end{document}